\newtheorem{theorem}{Theorem}[section]
\newtheorem{proposition}{Proposition}[section]
\newtheorem{corollary}{Corollary}[section]
\newtheorem{lemma}{Lemma}[section]
\theoremstyle{definition}
\theoremstyle{plain}
\theoremstyle{remark}
\newtheorem{remark}{\it Remark}[section]
\renewcommand{\div}{\mathop\mathrm{div}}
\newcommand{\vphi}{\varphi}
\begin{document}
\title
[Lower bounds for the spectrum of the Laplace and Stokes
operators]
 {Lower bounds for the spectrum of the Laplace and Stokes
operators}
\author[A. Ilyin]{Alexei A. Ilyin}
\address
{Keldysh Institute of Applied Mathematics, Moscow}
\email{ilyin@keldysh.ru}

\begin{abstract}
We prove Berezin--Li--Yau-type lower bounds with additional
term for the
eigenvalues of the  Stokes operator and improve the previously
known estimates for the Laplace operator.
Generalizations to higher-order operators are given.
\medskip\medskip\medskip

\textit{ Dedicated to Professor R.\,Temam on the occasion
of his 70th birthday}
\end{abstract}

\thanks
{
This work
was supported in part by the Russian Foundation for
Basic Research,
grant~nos.~09-01-00288 and 08-01-00099,
and by the
the RAS Programme no.1
`Modern problems of theoretical mathematics'
}

\subjclass{35P15, 35Q30.}

\keywords{Stokes operator,
Dirichlet Laplacian, lower bounds,  Navier--Stokes equations.}

\maketitle
\setcounter{equation}{0}
\section{Introduction}\label{S:Intro}

Sharp lower bounds for the sums of the first $m$
eigenvalues of the Dirichlet Laplacian
$$
-\Delta\vphi_k=\mu_k\vphi_k,
\qquad \vphi_k\vert_{\partial\Omega}=0
$$
were obtained in~\cite{Li-Yau}:
\begin{equation}\label{Li--Yau}
\sum_{k=1}^m\mu_k\ge\frac n{2+n}
\left(
\frac{(2\pi)^n}{\omega_n|\Omega|}
\right)^{2/n}m^{1+2/n}\,.
\end{equation}
Here $|\Omega|<\infty$ denotes the volume of
a domain $\Omega\subset\mathbb{R}^n$ and $\omega_n$ denotes
the volume of the unit ball in~$\mathbb{R}^n$.
It was shown in~\cite{L-W} that the
estimate~(\ref{Li--Yau}) is equivalent by means of the
Legendre transform to an earlier result of
Berezin~\cite{Berezin}.

In view of the classical H.\,Weyl asymptotic formula
$$
\mu_k\sim
\left(
\frac{(2\pi)^n}{\omega_n|\Omega|}
\right)^{2/n}k^{2/n}\quad\text{as $k\to\infty$},
$$
the coefficient of $m^{1+2/n}$ in~(\ref{Li--Yau})
is sharp,
However, an improvement of the Li--Yau bound with
additional term that is linear in $m$ was obtained
in~\cite{Melas}:
\begin{equation}\label{Melas-orig}
\sum_{k=1}^m\mu_k\ge\frac n{2+n}
\left(
\frac{(2\pi)^n}{\omega_n|\Omega|}
\right)^{2/n}m^{1+2/n}+
c_n\frac{|\Omega|}I\,m,
\end{equation}
where
\begin{equation}\label{I}
I=\int_\Omega x^2dx,
\end{equation}
and the constant $c_n$
depends only on the dimension: $c_n= c/(n+2)$
with $c$ being an absolute constant
(in fact, (\ref{Melas-orig}) holds with
$c=1/24$).
Of course $I$ can be replaced by
$I=\min_{a\in\mathbb{R}^n}\int_\Omega (x-a)^2dx$.

In the theory of the attractors for
the Navier--Stokes equations (see, for example,
\cite{B-V,CF88,T} and the references therein)
 lower bounds
for the sums of the eigenvalues $\{\lambda_k\}_{k=1}^\infty$
of the Stokes operator  are very important.
In the case of a smooth domain the
eigenvalue problem for the Stokes operator
reads:
 \begin{equation}\label{Stokessmooth}
\aligned
&-\Delta\, v_k \,+\, \nabla\, p_k\,=\,\lambda_kv_k,\\
&\div v_k\,=\,0,\,\,\,v_k\vert_{\partial\Omega }\,=\,0.
\endaligned
\end{equation}
Li--Yau-type lower bounds for the spectrum
of the Stokes operator were obtained in~\cite{I-FA2009}:
\begin{equation}\label{lowerStokes}
\sum_{k=1}^m\lambda_k\,\ge\,
\frac n{2+n}\left(
\frac{(2\pi)^n}{\omega_n(n-1)|\Omega|}
\right)^{2/n}m^{1+2/n}\,.
\end{equation}
The  coefficient of $m^{1+2/n}$ here is also sharp
in view of the asymptotic formula
(\cite{Babenko} ($n=3$), \cite{Metiv} ($n\ge2$)):
\begin{equation}\label{Stokesassymp}
\lambda_k\sim
\left(
\frac{(2\pi)^n}{\omega_n(n-1)|\Omega|}
\right)^{2/n}k^{2/n}\quad\text{as $k\to\infty$}.
\end{equation}

The main results of this paper are twofold.
First, we extend  the approach of~\cite{Melas}
to the case of the Stokes operator
and, secondly, we obtain the exact solution of the
corresponding minimization problem, thereby giving a
much better value of the constant $c_n$ in~(\ref{Melas-orig})
(in fact, the sharp value
 in the framework of the approach of~\cite{Melas}).

To describe the minimization problem we consider
in the case of the Laplacian
an $L_2$-orthonormal  family of functions
 $\{\vphi_k\}_{k=1}^m\in H^1_0(\Omega)$. Then
 the function $F(\xi)$
$$
F(\xi)=\sum_{k=1}^m|\widehat{\vphi}_k(\xi)|^2,
\qquad
\widehat{\vphi}_k(\xi)=
(2\pi)^{-n/2}\int_{\Omega}\vphi_k(x)e^{-i\xi x}dx
$$
satisfies $F(\xi)\le M=(2\pi)^{-n}|\Omega|$
(see~\cite{Li-Yau})
and the additional regularity property
which was found and  used in~\cite{Melas}:
$|\nabla F(\xi)|\le L=2(2\pi)^{-n}\sqrt{|\Omega I}$.
Here and in what follows $I$ is defined in~$(\ref{I})$.

For the Stokes operator we consider an
$\mathbf{L}_2$-orthonormal  family of vector functions
 $\{u_k\}_{k=1}^m\in \mathbf{H}^1_0(\Omega)$, with
 $\div u_k=0$.
 Then as we show in \S\ref{S:Orth-vec-fun}
  the corresponding function
 $F(\xi)=\sum_{k=1}^m|\widehat{u}_k(\xi)|^2$
 satisfies the conditions
 $F(\xi)\le M=(2\pi)^{-n}(n-1)|\Omega|$
 and
 $|\nabla F(\xi)|\le L=
 2(2\pi)^{-n}(n(n-1))^{1/2}\sqrt{|\Omega| I}$.

By orthonormality we always have
$\int_{\mathbb{R}^n}F(\xi)d\xi=m$,
and taking the first $m$ eigenfunctions of the
Laplace (or Stokes) operator
for the $\varphi_k$ (or the $u_k$, respectively) we get
$\int_{\mathbb{R}^n}|\xi|^2F(\xi)d\xi=
\sum_{k=1}^m\mu_k\ (=\sum_{k=1}^m\lambda_k)$,
and
$\sum_{k=1}^m\mu_k\ge\Sigma_M(m)$
(respectively, $\sum_{k=1}^m\lambda_k\ge\Sigma_M(m)$),
where $\Sigma_M(m)$ is the solution of the
minimization problem:
find $\Sigma_M(m)$
\begin{equation}\label{min-prob0}
\aligned
&\int_{\mathbb{R}^n}
|\xi|^2F(\xi)d\xi\to\inf:=\Sigma_M(m),
\quad\text{under the conditions}\\
&0\le F(\xi)\le M,\qquad \int_{\mathbb{R}^n} F(\xi)\,d\xi=m.
\endaligned
\end{equation}
It was shown in \cite{Li-Yau} that
the minimizer $F_*$ is radial and has the form
shown in Fig.~\ref{F*},
\begin{figure}[htb]\label{F*}
\unitlength=0.5mm
\linethickness{0.2mm}
\begin{center}
\begin{picture}(140,80)
\put(0,0){\vector(1,0){140}}
\put(0,0){\vector(0,1){70}}
\put(0,40){\line(1,0){60}}
\put(130,-8){$r$}
\put(60,-8){$r_*$}
\put(-10,40){$M$}
\multiput(60,0)(0,3){14}%
{\circle*{0.4}}
\end{picture}
\end{center}
\caption{ Minimizer $F_*(|\xi|)$}\label{F_*}
\end{figure}
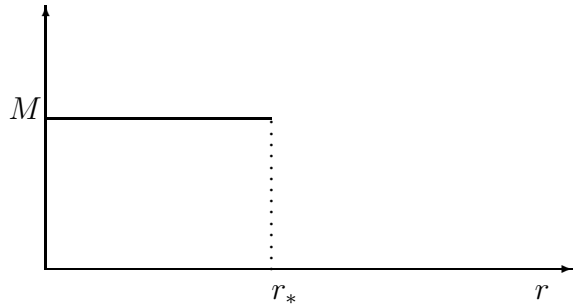
\medskip
where $r_*$  is defined by the condition
$
\int_{\mathbb{R}^n}F_*(|\xi|)d\xi=
m
$:
$$
\int_{\mathbb{R}^n}F_*(|\xi|)d\xi=
\sigma_{n}\int_0^{r_*}r^{n-1}F_*(r)dr=
\omega_{n}Mr_*^n=m.
$$
Then
$$
\Sigma_M(m)=
\int_{\mathbb{R}^n} |\xi|^2F_*(\xi)d\xi\,=\,
\sigma_{n}M\int_0^{r_*}r^{n+1}dr\,=\,
\frac{n}{n+2}
\biggl(\frac1{\omega_nM}\biggr)^{2/n}m^{1+2/n}
$$
giving~(\ref{Li--Yau}) upon substituting
$M=(2\pi)^{-n}|\Omega|$ for the Laplacian and
giving~(\ref{lowerStokes}) upon substituting
$M=(2\pi)^{-n}(n-1)|\Omega|$ for the Stokes operator
\cite{I-FA2009}.

The additional regularity property of $F(\xi)$:
$|\nabla F(\xi)|\le L$
 gives a better lower bound~\cite{Melas}:
$\sum_{k=1}^m\mu_k\ge\Sigma_{M,L}(m)$,
where $\Sigma_{M,L}(m)$ is the solution of the
minimization problem
\begin{equation}\label{min-prob1}
\aligned
\int_{\mathbb{R}^n} |\xi|^2F(\xi)d\xi\to\inf=:
\Sigma_{M,L}(m)
\quad\text{under the conditions},\\
0\le F(\xi)\le M,\qquad \int_{\mathbb{R}^n} F(\xi)\,d\xi=m,
 \qquad |\nabla F(\xi)|\le L.
\endaligned
\end{equation}
Clearly $\Sigma_{M,L}(m)\ge\Sigma_{M}(m)$ and
Lemma~1 in~\cite{Melas} (in the notation
our paper) reads:
\begin{equation}\label{LbdMelas}
\Sigma_{M,L}(m)\ge
\frac{n}{n+2}
\biggl(\frac1{\omega_nM}\biggr)^{2/n}m^{1+2/n}+
\frac{1}{6(n+2)}\,\frac{M^2}{L^2}\,m\,,
\end{equation}
giving~(\ref{Melas-orig}) with $c_n=1/(24(n+2))$
by substituting $M=(2\pi)^{-n}|\Omega|$
and $L=2(2\pi)^{-n}\sqrt{|\Omega|I}$.

In \S \ref{S:Min-problem} we find the exact
solution of the minimization problem~(\ref{min-prob1}):
$$
\Sigma_{M,L}(m)=
\frac{\sigma_nM^{n+3}}{(n+2)(n+3)L^{n+2}}
\bigl((t(m_*)+1)^{n+3}-t(m_*)^{n+3}\bigr),
$$
where $t(m_*)$ is the unique positive root
of the equation
$$
(t+1)^{n+1}-t^{n+1}=m_*,
\qquad m_*=
m\frac{(n+1)L^n}{\omega_nM^{n+1}}\,.
$$
We also find the  first three terms of
the asymptotic expansion of the solution
$\Sigma_{M,L}(m)$ in the following  descending  powers of $m$:
$m^{1+2/n}$, $m$, $m^{1-2/n}$, $m^{1-4/n}$, $\dots$.
Namely,
\begin{equation}\label{Sigma-asymp1}
\aligned
\Sigma_{M,L}(m)&=\Sigma_0(m)+O(m^{1-4/n}),\\
\Sigma_0(m)&=\frac{n}{n+2}
\biggl(\frac1{\omega_nM}\biggr)^{2/n}m^{1+2/n}+
\frac{n}{12}\,\frac{M^2}{L^2}\,m
-
\frac{n(n-1)(3n+2)}{1440}
\frac{M^4(M\omega_n)^{2/n}}{L^4}
m^{1-2/n},
\endaligned
\end{equation}
which shows that the second term is for all $n$
linear with respect to $m$ and positive
with coefficient that is  $n(n+2)/2$ times greater
than that in~(\ref{LbdMelas}),
while the third term is always negative.

Dropping the third term and using the expressions for
$M$ and $L$ we obtain the following
asymptotic lower bounds. Accordingly,
for large $m$ the
coefficient of $m$ in the second term on
the right-hand side in~(\ref{Lapasymp1})
is $n(n+2)/2$ times greater than that in~(\ref{Melas-orig}).

\begin{theorem}\label{T:L-S-asymp1}
The eigenvalues $\{\mu_k\}_{k=1}^\infty$ and
$\{\lambda_k\}_{k=1}^\infty$ of the Laplace
and Stokes operators satisfy the following
lower bounds:
\begin{eqnarray}
  \sum_{k=1}^m\mu_k&\ge& \frac n{n+2}
\biggl(\frac{(2\pi)^n}{\omega_n|\Omega|}\biggr)^{2/n}
m^{1+2/n}+\frac {n}{48}\frac {|\Omega|}{I}
\,m\,(1-\varepsilon_n(m)),\label{Lapasymp1}\\
  \sum_{k=1}^m\lambda_k &\ge&
\frac n{n+2}
\biggl(\frac{(2\pi)^n}{\omega_n(n-1)|\Omega|}\biggr)^{2/n}
m^{1+2/n}+\frac {(n-1)}{48}\frac {|\Omega|}{I}
\,m\,(1-\varepsilon_n(m)),\label{Stasymp1}
\end{eqnarray}
where $\varepsilon_n(m)\ge0$, $\varepsilon_n(m)=O(m^{-2/n})$.
\end{theorem}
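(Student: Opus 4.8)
The plan is to obtain both inequalities by specializing the exact solution of the minimization problem~(\ref{min-prob1}) to the two pairs of constants $(M,L)$ recorded in the introduction. The reduction carried out in \S\ref{S:Orth-vec-fun} yields $\sum_{k=1}^m\mu_k\ge\Sigma_{M,L}(m)$ with $M=(2\pi)^{-n}|\Omega|$ and $L=2(2\pi)^{-n}\sqrt{|\Omega|I}$, and $\sum_{k=1}^m\lambda_k\ge\Sigma_{M,L}(m)$ with $M=(2\pi)^{-n}(n-1)|\Omega|$ and $L=2(2\pi)^{-n}(n(n-1))^{1/2}\sqrt{|\Omega|I}$. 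Hence everything reduces to evaluating the expansion~(\ref{Sigma-asymp1}) for these constants, and the only genuine algebra is the ratio $M^2/L^2$, which equals $|\Omega|/(4I)$ in the first case and $(n-1)|\Omega|/(4nI)$ in the second.

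First I would check the two leading terms of $\Sigma_0(m)$. The principal term $\frac{n}{n+2}(\omega_nM)^{-2/n}m^{1+2/n}$ reproduces exactly the Li--Yau term in~(\ref{Lapasymp1}) and the Stokes term in~(\ref{Stasymp1}), since $(\omega_nM)^{-1}$ becomes $(2\pi)^n/(\omega_n|\Omega|)$ and $(2\pi)^n/(\omega_n(n-1)|\Omega|)$, respectively. The linear term $\frac{n}{12}\frac{M^2}{L^2}m$ becomes $\frac{n}{48}\frac{|\Omega|}{I}m$ for the Laplacian; for the Stokes operator the explicit factor $n$ cancels against the $n$ in the denominator of $M^2/L^2$, so it becomes $\frac{n-1}{48}\frac{|\Omega|}{I}m$. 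These are precisely the coefficients appearing in the theorem.

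It then remains to absorb the (negative) third term of~(\ref{Sigma-asymp1}) together with the $O(m^{1-4/n})$ remainder into the correction factor. Writing $A(m):=\Sigma_{M,L}(m)-\frac{n}{n+2}(\omega_nM)^{-2/n}m^{1+2/n}$ for the genuine additional term of the exact solution, I would define
$$
\varepsilon_n(m):=1-\frac{A(m)}{\frac{n}{12}\,\frac{M^2}{L^2}\,m},
$$
so that the right-hand sides of~(\ref{Lapasymp1}) and~(\ref{Stasymp1}) coincide with $\Sigma_{M,L}(m)$ and both bounds reduce to the reduction estimate $\sum\mu_k\ge\Sigma_{M,L}(m)$ (resp.\ for $\lambda_k$). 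The scaling structure of~(\ref{min-prob1}) shows that this ratio is a \emph{universal} function of the dimensionless parameter $m_*=m(n+1)L^n/(\omega_nM^{n+1})$ depending only on $n$, the dimensional factors $M$ and $L$ cancelling; this is what allows a single symbol $\varepsilon_n$ to serve both bounds. Inserting~(\ref{Sigma-asymp1}) into the definition gives $\varepsilon_n(m)=\frac{(n-1)(3n+2)}{120}(n+1)^{2/n}m_*^{-2/n}+O(m_*^{-4/n})$, whence the claimed estimate $\varepsilon_n(m)=O(m^{-2/n})$.

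The one point that genuinely requires the closed form rather than the expansion is the sign condition $\varepsilon_n(m)\ge0$, equivalently $A(m)\le\frac{n}{12}\frac{M^2}{L^2}m$ for every admissible $m$. For large $m$ this is immediate, since the leading coefficient above is positive and the first correction to the linear term is therefore negative. To secure the sign for all $m$ I expect to return to the closed form
$$
\Sigma_{M,L}(m)=\frac{\sigma_nM^{n+3}}{(n+2)(n+3)L^{n+2}}\bigl((t+1)^{n+3}-t^{n+3}\bigr),\qquad (t+1)^{n+1}-t^{n+1}=m_*,
$$
rewrite the inequality $A(m)\le\frac{n}{12}\frac{M^2}{L^2}m$ as an elementary estimate in the single variable $t>0$, and verify it by monotonicity. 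This sign verification is the main obstacle; everything else is bookkeeping with the two substitutions.
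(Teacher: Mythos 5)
Your proposal is correct and follows the same route as the paper: the paper's proof of Theorem~\ref{T:L-S-asymp1} consists precisely of the reduction $\sum_{k=1}^m\mu_k\ge\Sigma_{M,L}(m)$ (resp.\ $\sum_{k=1}^m\lambda_k\ge\Sigma_{M,L}(m)$) followed by substitution of $(M_\mathrm{L},L_\mathrm{L})$ and $(M_\mathrm{S},L_\mathrm{S})$ into the expansion~(\ref{Sigma-asymp1}), and your bookkeeping ($M^2/L^2=|\Omega|/(4I)$ and $(n-1)|\Omega|/(4nI)$, the universality of $\varepsilon_n$ as a function of $m_*$, and the coefficient $\frac{(n-1)(3n+2)}{120}(n+1)^{2/n}$) all checks out. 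The one place you overshoot is in declaring the sign condition $\varepsilon_n(m)\ge0$ the ``main obstacle'' requiring the closed form: for the theorem as stated it costs nothing, since replacing your $\varepsilon_n(m)$ by $\max(\varepsilon_n(m),0)$ only decreases the right-hand sides of~(\ref{Lapasymp1})--(\ref{Stasymp1}) while preserving $\varepsilon_n(m)=O(m^{-2/n})$, so no monotonicity argument with $t(m_*)$ is needed. The genuinely closed-form work in the paper is the separate lower bound $\Sigma_{M,L}(m)>\Sigma_0(m)$ for $n=2,3,4$, which is what upgrades the asymptotic statement to the uniform constants of Theorem~\ref{T:L-S-asymp234}.
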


Then in \S\ref{S:234} we turn to the analysis of the
particular cases $n=2,3,4$. The main result consists in
the explicit formulas for $\Sigma_{M,L}(m)$. The case $n=2$
is the simplest and we find
(see Lemma~\ref{l:Sigman=2}) the explicit formula for the
exact solution which coincides with the first three terms
of its asymptotic expansion
$$
\Sigma_{M,L}(m)=\Sigma_0(m)
=\frac1{2\pi M}\,m^2\,+\,\frac{M^2}{6L^2}\,m-
\frac{\pi M^5}{90 L^4}.
$$
For $n=3,4$ by means of the explicit formulas
in Lemmas~\ref{l:Sigman=3} and \ref{l:Sigman=4}
we show that
$$
\Sigma_{M,L}(m)>\Sigma_0(m).
$$
(The inequality
$
\Sigma_{M,L}(m)\ge\Sigma_0(m)
$
probably holds for any $n$, not only
for $n=2,3,4$.)
Then the negative contribution from the
third term in~(\ref{Sigma-asymp1}) is compensated
by a $(1-\beta)$-part of the positive second term
(where $0<\beta<1$ and $\beta$ is sufficiently close to $1$)
and we obtain the following theorem.
\begin{theorem}\label{T:L-S-asymp234}
The eigenvalues $\{\mu_k\}_{k=1}^\infty$ and
$\{\lambda_k\}_{k=1}^\infty$ for
$n=2,3,4$ satisfy:
\begin{eqnarray}
  \sum_{k=1}^m\mu_k&\ge& \frac n{n+2}
\biggl(\frac{(2\pi)^n}{\omega_n|\Omega|}\biggr)^{2/n}
m^{1+2/n}+\frac {n}{48}\beta^\mathrm{L}_n\frac {|\Omega|}{I}
\,m,\label{Lapasymp}\\
  \sum_{k=1}^m\lambda_k &\ge&
\frac n{n+2}
\biggl(\frac{(2\pi)^n}{\omega_n(n-1)|\Omega|}\biggr)^{2/n}
m^{1+2/n}+\frac {(n-1)}{48}
\beta^\mathrm{S}_n\frac {|\Omega|}{I}
\,m,\label{Stasymp}
\end{eqnarray}
where in the two-dimensional case
$
\beta^\mathrm{L}_2=\frac{119}{120}
$,
$
\beta^\mathrm{S}_2=\frac{239}{240},
$
while for $n=3,4$
it suffices to take
$
\beta^\mathrm{L}_3=0.986,
$
$
\beta^\mathrm{S}_3=0.986
$
and
$
\beta^\mathrm{L}_4=0.983,
$
$
\beta^\mathrm{S}_4=0.978.
$
\end{theorem}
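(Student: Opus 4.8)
The plan is to start from the chain $\sum_{k=1}^m\mu_k\ge\Sigma_{M,L}(m)\ge\Sigma_0(m)$ (and the analogous chain for the $\lambda_k$), which holds for $n=2,3,4$ because $\Sigma_{M,L}=\Sigma_0$ when $n=2$ and $\Sigma_{M,L}>\Sigma_0$ when $n=3,4$ by Lemmas~\ref{l:Sigman=2}--\ref{l:Sigman=4}. Writing $\Sigma_0(m)=Am^{1+2/n}+Bm-Cm^{1-2/n}$ with the positive coefficients $A,B,C$ read off from~(\ref{Sigma-asymp1}), one checks that after inserting the values of $M$ and $L$ the terms $Am^{1+2/n}$ and $Bm$ become precisely the leading term and the $\frac{n}{48}\frac{|\Omega|}{I}\,m$ (respectively $\frac{n-1}{48}\frac{|\Omega|}{I}\,m$) term of~(\ref{Lapasymp}) (respectively~(\ref{Stasymp})). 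Hence the theorem is equivalent to $\Sigma_0(m)\ge Am^{1+2/n}+\beta Bm$, that is, to absorbing the negative third term into a fraction $1-\beta$ of the positive second term:
$$
(1-\beta)Bm\ge Cm^{1-2/n},\qquad\text{i.e.}\qquad m^{2/n}\ge\frac{C}{(1-\beta)B}.
$$
Since the left-hand side increases with $m$, it suffices to verify this at $m=1$.

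The first step is to recast the threshold in the scale-invariant variable $m_*=m(n+1)L^n/(\omega_nM^{n+1})$. A short computation with the coefficients of~(\ref{Sigma-asymp1}) gives $C/B=\frac{(n-1)(3n+2)}{120}\,\omega_n^{2/n}M^{2+2/n}/L^2$, while $m_*^{2/n}=m^{2/n}(n+1)^{2/n}L^2/(\omega_n^{2/n}M^{2+2/n})$; the factor $\omega_n^{2/n}M^{2+2/n}/L^2$ therefore cancels and the condition collapses to the purely dimensionless inequality
$$
m_*^{2/n}\ge\frac{(n+1)^{2/n}(n-1)(3n+2)}{120(1-\beta)}.
$$
Everything now hinges on a universal lower bound for $m_*$ at $m=1$, depending only on $n$.

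Producing this bound is the main obstacle, and the device I would use is the classical isoperimetric inequality for the moment of inertia: the ball minimizes $I=\min_{a}\int_\Omega|x-a|^2\,dx$ among domains of fixed volume, whence $I\ge\frac{n}{n+2}\,\omega_n^{-2/n}|\Omega|^{(n+2)/n}$ (this follows from the bathtub principle, the sublevel sets of $|x|^2$ being balls). Substituting $M,L$ one finds for the Laplacian $m_*=m(n+1)(4\pi)^n\omega_n^{-1}I^{n/2}|\Omega|^{-n/2-1}$, and for the Stokes operator the same with the extra factor $n^{n/2}(n-1)^{-(n+2)/2}$. Inserting the inequality for $I$ makes the powers of $|\Omega|$ cancel exactly, and with $m\ge1$ leaves the clean bounds
$$
m_*\ge\kappa_n^{\mathrm{L}}:=(n+1)\frac{(4\pi)^n}{\omega_n^2}\Bigl(\frac{n}{n+2}\Bigr)^{n/2},\qquad
m_*\ge\kappa_n^{\mathrm{S}}:=\kappa_n^{\mathrm{L}}\,\frac{n^{n/2}}{(n-1)^{(n+2)/2}}.
$$

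Finally I substitute $\kappa_n$ into the dimensionless threshold and solve for the largest admissible $\beta$. For $n=2$ one gets $\kappa_2^{\mathrm{L}}=24$, $\kappa_2^{\mathrm{S}}=48$, and since $m_*^{2/n}=m_*$ the threshold is met with equality exactly at $\beta^{\mathrm{L}}_2=119/120$ and $\beta^{\mathrm{S}}_2=239/240$; because $\Sigma_{M,L}=\Sigma_0$ here, these are sharp within the method. For $n=3,4$ the same substitution yields critical values lying just above the rounded choices $\beta^{\mathrm{L}}_3=\beta^{\mathrm{S}}_3=0.986$ and $\beta^{\mathrm{L}}_4=0.983,\ \beta^{\mathrm{S}}_4=0.978$, which are therefore admissible. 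I expect the only delicate points to be the careful bookkeeping of the $\omega_n$-factors in passing to $m_*$ and the numerical evaluation of $\kappa_n^{2/n}$ for $n=3,4$; the conceptual heart is the reduction to $m_*$ combined with the moment-of-inertia isoperimetric inequality.
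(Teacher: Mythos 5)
Your proposal is correct and follows essentially the same route as the paper: reduce to the three-term expression $\Sigma_0(m)$ via Lemmas~\ref{l:Sigman=2}--\ref{l:Sigman=4}, then absorb the negative $m^{1-2/n}$ term into a $(1-\beta)$-fraction of the linear term using the lower bound $m_*\ge m_0$ that comes from the moment-of-inertia inequality~(\ref{IgeOm}); your constants $\kappa_n^{\mathrm{L}},\kappa_n^{\mathrm{S}}$ coincide with the paper's $m_0^{\mathrm{L}},m_0^{\mathrm{S}}$ in~(\ref{m*n}) and reproduce the stated values of $\beta$. The only (cosmetic) difference is that you carry out the absorption uniformly in the dimensionless variable $m_*$ for all of $n=2,3,4$, whereas the paper treats $n=2$ directly in terms of $|\Omega|^2/I\le 2\pi$ and performs the $n=3,4$ absorption inside Lemmas~\ref{l:Sigman=3} and~\ref{l:Sigman=4}.
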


Finally, in \S\ref{S:bi-L} we prove two-term
lower bounds for the Dirichlet bi-Laplacian.

\begin{remark}\label{Rem:Weidl}
{\rm
Two term lower bounds for the
2D Laplacian with the second term
of growth higher than linear  in
$m$ were obtained in~\cite{Weidl}.
They depend on the shape of $\partial\Omega$.
}
\end{remark}

\setcounter{equation}{0}
\section{Estimates for orthonormal
 vector functions}\label{S:Orth-vec-fun}

Throughout $\Omega$ is an open subset of $\mathbb{R}^n$
with finite $n$-dimensional Lebesgue measure $|\Omega|$:
$$
\Omega\subset\mathbb{R}^n,\ n\ge2,\qquad|\Omega|<\infty.
$$
We recall the functional definition of
the Stokes operator \cite{CF88,Lad,TNS}:
 $\mathcal{V}$ denotes the set of smooth
divergence-free vector functions with compact supports
$$
\mathcal{V}=\{u:\Omega\to\mathbb{R}^n,
\ u\in \mathbf{C}^\infty_0(\Omega),\
\div u=0\}
$$
and $H$ and $V$ are the the closures of
$\mathcal{V}$ in  $\mathbf{L}_2(\Omega)$ and
$\mathbf{H}^1(\Omega)$, respectively. The
Helmholtz--Leray orthogonal projection $P$ maps
$\mathbf{L}_2(\Omega)$ onto $H$,
$P:\mathbf{L}_2(\Omega)\to H$.
We have (see~\cite{TNS})
\begin{eqnarray}\label{HandV}
\mathbf{L}_2(\Omega)&=&H\oplus H^\perp,
\quad H^\perp=\{u\in\mathbf{L}_2(\Omega),
u=\nabla p,\ p\in L_2^{\mathrm{loc}}(\Omega)\},\\
\nonumber
V&\subseteq&\{u\in\mathbf{H}^1_0(\Omega),\ \div u=0\},
\end{eqnarray}
where  the last inclusion becomes equality
for a bounded $\Omega$ with Lipschitz boundary.
The Stokes operator $A$ is defined by the relation
\begin{equation}\label{Stokes}
(Au,v)=(\nabla u,\nabla v) \quad
\text{for all $u,v$ in $V$}
\end{equation}
and is an isomorphism between $V$ and $V'$.
For a sufficiently smooth $u$
$$
Au=-P\Delta u.
$$
The Stokes operator $A$ is
an unbounded self-adjoint positive operator in $H$
with discrete spectrum
$\{\lambda_k\}_{k=1}^\infty$, $\lambda_k\to\infty$ as
$k\to\infty$:
\begin{equation}\label{spectrum}
Av_k=\lambda_kv_k, \quad
0<\lambda_1\le\lambda_2\le\dots\,,
\end{equation}
where $\{v_k\}_{k=1}^\infty\in V$
are the corresponding orthonormal eigenvectors.
Taking the scalar product with $v_k$ we have by
orthonormality and~(\ref{Stokes}) that
\begin{equation}\label{lambdak}
    \lambda_k=\|\nabla v_k\|^2.
\end{equation}
In case when $\Omega$
is a bounded domain with smooth boundary the eigenvalue
problem~(\ref{spectrum})
goes over  to~(\ref{Stokessmooth}).

We recall that a family
 $\{\vphi_i\}_{i=1}^m\in L_2(\Omega)$ is called
suborthonormal~\cite{G-M-T}
if for any  $\zeta\in\mathbb{C}^m$
\begin{equation}\label{suborth}
\sum_{i,j=1}^m\zeta_i\zeta_j^*(\vphi_i,\vphi_j)\le
\sum_{j=1}^m|\zeta_j|^2.
\end{equation}

\begin{lemma}\label{l:bessel}
Any suborthonormal family $\{\vphi_i\}_{i=1}^m$
satisfies Bessel's inequality:
\begin{equation}\label{bessel}
\sum_{k=1}^mc_k^2\le\|f\|_{L_2(\Omega)}^2,
\quad\text{where}\quad c_k=(\vphi_k,f).
\end{equation}
\end{lemma}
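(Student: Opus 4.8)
The plan is to reduce Bessel's inequality to the defining suborthonormality condition~(\ref{suborth}) through a single application of the Cauchy--Schwarz inequality. The key observation is that the left-hand side of~(\ref{suborth}) is nothing but the squared norm of a linear combination: for any $\zeta\in\mathbb{C}^m$,
\[
\Bigl\|\sum_{i=1}^m\zeta_i\vphi_i\Bigr\|^2
=\sum_{i,j=1}^m\zeta_i\zeta_j^*(\vphi_i,\vphi_j)\le\sum_{j=1}^m|\zeta_j|^2,
\]
so~(\ref{suborth}) says precisely that $\bigl\|\sum_i\zeta_i\vphi_i\bigr\|\le|\zeta|$ for every coefficient vector $\zeta$, where $|\zeta|$ denotes its Euclidean length.

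First I would pair an arbitrary combination $\sum_k\zeta_k\vphi_k$ with $f$ and combine Cauchy--Schwarz with the bound above:
\[
\Bigl|\sum_{k=1}^m\zeta_kc_k\Bigr|
=\Bigl|\Bigl(\sum_{k=1}^m\zeta_k\vphi_k,\,f\Bigr)\Bigr|
\le\Bigl\|\sum_{k=1}^m\zeta_k\vphi_k\Bigr\|\,\|f\|
\le|\zeta|\,\|f\|,
\]
with $c_k=(\vphi_k,f)$ as in the statement. Since this holds for every $\zeta\in\mathbb{C}^m$, the final step is to pick $\zeta$ that turns the left-hand side into the quantity we want to estimate. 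Choosing $\zeta_k=c_k^*$ gives $\sum_k\zeta_kc_k=\sum_k|c_k|^2$ and $|\zeta|^2=\sum_k|c_k|^2$, whence
\[
\sum_{k=1}^m|c_k|^2\le\Bigl(\sum_{k=1}^m|c_k|^2\Bigr)^{1/2}\|f\|.
\]
Dividing by $\bigl(\sum_k|c_k|^2\bigr)^{1/2}$ (the case of a vanishing sum being trivial) yields $\sum_k|c_k|^2\le\|f\|^2$, which is~(\ref{bessel}).

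I do not expect a genuine obstacle here; the proof is short and the only point requiring care is the bookkeeping of complex conjugates, so that the single vector $\zeta$ is used consistently in the pairing $(\sum_k\zeta_k\vphi_k,f)$ and in the norm estimate coming from~(\ref{suborth})'s convention for $(\vphi_i,\vphi_j)$. If one preferred to avoid invoking Cauchy--Schwarz explicitly, the same conclusion would follow by expanding $\|f-\sum_k\zeta_k\vphi_k\|^2\ge0$ and optimizing over $\zeta$, but the route above is the most economical and isolates exactly where suborthonormality enters.
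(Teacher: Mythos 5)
Your proof is correct, and it takes a genuinely different route from the paper. You observe that condition~(\ref{suborth}) is exactly the statement $\bigl\|\sum_i\zeta_i\vphi_i\bigr\|^2\le|\zeta|^2$, pair the combination with $f$, apply Cauchy--Schwarz, and optimize over $\zeta$ by taking $\zeta_k=c_k^*$ --- a clean duality argument that lives entirely inside the Hilbert space $L_2(\Omega)$ and in fact works verbatim in any abstract Hilbert space. The paper instead proves the lemma by a dilation-type construction: it extends $\{\vphi_i\}_{i=1}^m$ to a genuinely orthonormal system $\psi_k=\vphi_k+\chi_k$ in $L_2(\mathbb{R}^n)$, where the corrections $\chi_k=\sum_j a_{kj}\omega_j$ are supported in $\mathbb{R}^n\setminus\Omega$ and the matrix $a$ is the square root of $b_{ij}=\delta_{ij}-(\vphi_i,\vphi_j)$, which is nonnegative precisely by~(\ref{suborth}); the classical Bessel inequality for $\{\psi_k\}$ then transfers back since $(\psi_k,f)=(\vphi_k,f)$ for $f$ supported in $\Omega$. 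Your approach is shorter and avoids the (harmless but implicit) need for room outside $\Omega$ to place the auxiliary orthonormal system $\{\omega_i\}$; the paper's approach buys a structural explanation --- every suborthonormal family is the restriction of an orthonormal one, matching Lemma~\ref{l:vec_suborth1} --- at the cost of the matrix square-root construction. Either argument is complete; your only point requiring care, the conjugate bookkeeping so that $\bigl(\sum_k\zeta_k\vphi_k,f\bigr)=\sum_k\zeta_kc_k$ while the Gram form in~(\ref{suborth}) is $\sum_{i,j}\zeta_i\zeta_j^*(\vphi_i,\vphi_j)$, is handled consistently.
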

\begin{proof}
Given an suborthonormal system~$\{\vphi_i\}_{i=1}^m$ (with
supports in $\Omega$), we build it up to a orthonormal
system $\{\psi_i\}_{i=1}^m\in L_2(\mathbb{R}^n)$ of the
form $\psi_k=\vphi_k+\chi_k$,
$\chi_k(x)=\sum_{j=1}^ma_{kj}\omega_j(x)$, where
$\{\omega_i\}_{i=1}^m$ is an arbitrary orthonormal system
with supports in $\mathbb{R}^n\setminus\Omega$. The
condition $(\psi_k,\psi_l)=\delta_{kl}$ is satisfied if the
we chose for the matrix $a=a_{ij}$ the symmetric
non-negative matrix $a=b^{1/2}$, where
$b_{ij}=\delta_{ij}-(\vphi_i,\vphi_j)$ (in view
of~(\ref{suborth}), $b$ is non-negative).

The system $\{\psi_i\}_{i=1}^m$ classically satisfies
Bessel's inequality, and since
$(\psi_k,f)=(\vphi_k,f)$, this gives~(\ref{bessel}).
\end{proof}

Suborthonormal families typically arise as a result of the
action of an orthogonal projection~\cite{G-M-T}.
\begin{lemma}\label{l:vec_suborth1}
If
$\{\vphi_k\}_{k=1}^m$ is orthonormal and
$P$ is an orthogonal projection, then
both families
$\eta_k=P\vphi_k$ and
$\xi_k=(I-P)\vphi_k$
 are suborthonormal.
\end{lemma}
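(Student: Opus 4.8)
The plan is to reduce the suborthonormality condition~(\ref{suborth}) for either family to the single elementary fact that an orthogonal projection does not increase the $\mathbf{L}_2$-norm. First I would fix an arbitrary $\zeta\in\mathbb{C}^m$ and form the single function $v=\sum_{i=1}^m\zeta_i\varphi_i$. Since $\{\varphi_k\}_{k=1}^m$ is orthonormal, expanding the norm and using bilinearity of the inner product gives $\|v\|^2=\sum_{i,j=1}^m\zeta_i\zeta_j^*(\varphi_i,\varphi_j)=\sum_{j=1}^m|\zeta_j|^2$, which is exactly the right-hand side appearing in~(\ref{suborth}).

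Next, for $\eta_k=P\varphi_k$ I would note that $\sum_i\zeta_i\eta_i=P\sum_i\zeta_i\varphi_i=Pv$, so that the left-hand side of~(\ref{suborth}) for the family $\{\eta_k\}$ equals $\sum_{i,j=1}^m\zeta_i\zeta_j^*(P\varphi_i,P\varphi_j)=(Pv,Pv)=\|Pv\|^2$. Because $P$ is an orthogonal projection it is self-adjoint and idempotent, whence the Pythagorean identity $\|Pv\|^2+\|(I-P)v\|^2=\|v\|^2$ holds and in particular $\|Pv\|^2\le\|v\|^2$. Combining this with the computation of the preceding paragraph yields $\sum_{i,j=1}^m\zeta_i\zeta_j^*(\eta_i,\eta_j)=\|Pv\|^2\le\|v\|^2=\sum_{j=1}^m|\zeta_j|^2$, i.e. the suborthonormality of $\{\eta_k\}$.

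For $\xi_k=(I-P)\varphi_k$ the same argument applies verbatim, since $I-P$ is again an orthogonal projection: one gets $\sum_{i,j=1}^m\zeta_i\zeta_j^*(\xi_i,\xi_j)=\|(I-P)v\|^2\le\|v\|^2=\sum_{j=1}^m|\zeta_j|^2$. In fact both estimates drop out simultaneously from the single Pythagorean identity already noted. As $\zeta\in\mathbb{C}^m$ was arbitrary, both families obey~(\ref{suborth}).

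I do not expect any real obstacle here: the whole proof rests on the norm-reducing property $\|Pv\|\le\|v\|$ of orthogonal projections, together with the observation that the quadratic form in~(\ref{suborth}) factors through the single function $v=\sum_i\zeta_i\varphi_i$. The only point that needs care is the bookkeeping of complex conjugates, so that $\sum_{i,j}\zeta_i\zeta_j^*(P\varphi_i,P\varphi_j)$ is correctly identified with $\|Pv\|^2$; once the conjugation convention of the inner product is fixed this is immediate.
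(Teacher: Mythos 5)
Your proof is correct and complete. The paper itself states this lemma without proof (it is quoted from~\cite{G-M-T}), and your argument --- passing to the single vector $v=\sum_i\zeta_i\varphi_i$, identifying the quadratic form in~(\ref{suborth}) with $\|Pv\|^2$, and invoking the norm-contracting property of the orthogonal projections $P$ and $I-P$ --- is exactly the standard reasoning behind it; nothing is missing.
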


We now obtain some estimates for the Fourier transforms
for  (sub)orthonormal families.
\begin{lemma}\label{l:Fourier-bessel}
If
$\{\vphi_k\}_{k=1}^m$ is suborthonormal, then
\begin{equation}\label{Fourier-bessel}
\sum_{k=1}^m|\widehat{\vphi}_k(\xi)|^2\le
(2\pi)^{-n}|\Omega|.
\end{equation}
\end{lemma}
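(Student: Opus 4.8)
The plan is to recognize the Fourier transform evaluated at a fixed frequency as a single generalized Fourier coefficient against a plane wave, and then to invoke the Bessel inequality of Lemma~\ref{l:bessel}.

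First I would fix $\xi\in\mathbb{R}^n$ and introduce the function $g_\xi(x)=(2\pi)^{-n/2}e^{i\xi x}$, which lies in $L_2(\Omega)$ since $|\Omega|<\infty$. By the very definition of the Fourier transform,
$$
\widehat{\vphi}_k(\xi)=(2\pi)^{-n/2}\int_\Omega\vphi_k(x)e^{-i\xi x}dx=(\vphi_k,g_\xi)_{L_2(\Omega)}.
$$
Thus, evaluating the sum at this fixed $\xi$ amounts to summing the squared moduli of the coefficients $c_k=(\vphi_k,g_\xi)$ of the single function $f=g_\xi$ with respect to the family $\{\vphi_k\}_{k=1}^m$.

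Next, applying Lemma~\ref{l:bessel} to the suborthonormal family $\{\vphi_k\}_{k=1}^m$ with $f=g_\xi$ gives
$$
\sum_{k=1}^m|\widehat{\vphi}_k(\xi)|^2\le\|g_\xi\|_{L_2(\Omega)}^2.
$$
Here I use the complex form $\sum_k|c_k|^2$ of Bessel's inequality; the build-up construction in the proof of Lemma~\ref{l:bessel} carries over verbatim to complex-valued $f$, since the matrix $b_{ij}=\delta_{ij}-(\vphi_i,\vphi_j)$ is Hermitian and, by~(\ref{suborth}), non-negative, so $a=b^{1/2}$ is still well defined.

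Finally, the right-hand side is an elementary computation independent of $\xi$: because $|e^{i\xi x}|=1$,
$$
\|g_\xi\|_{L_2(\Omega)}^2=(2\pi)^{-n}\int_\Omega|e^{i\xi x}|^2dx=(2\pi)^{-n}|\Omega|,
$$
which yields the claimed bound~(\ref{Fourier-bessel}). I do not anticipate any genuine obstacle: the one point meriting a word of care is the passage to complex-valued test functions in Bessel's inequality, and everything else is a direct substitution together with the trivial norm computation.
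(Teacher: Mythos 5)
Your proposal is correct and follows exactly the paper's own argument: the paper proves Lemma~\ref{l:Fourier-bessel} by applying Bessel's inequality~(\ref{bessel}) with $f_\xi(x)=(2\pi)^{-n/2}e^{-i\xi x}$ and computing $\|f_\xi\|^2_{L_2(\Omega)}=(2\pi)^{-n}|\Omega|$. Your additional remark that the build-up construction in Lemma~\ref{l:bessel} extends to complex-valued $f$ is a sensible point of care, consistent with the complex form~(\ref{suborth}) of the suborthonormality condition.
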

\begin{proof}
This follows from~(\ref{bessel}) with
$f(x)=f_\xi(x)=(2\pi)^{-n/2}e^{-i\xi x}$.
\end{proof}
\begin{corollary} If  the family of vector functions
$\{u_k\}_{k=1}^m$ is orthonormal
in $\mathbf{L}_2(\Omega)$, then
\begin{equation}\label{n}
\sum_{k=1}^m|\widehat {u}_k(\xi)|^2\le(2\pi)^{-n}\, n|\Omega|.
\end{equation}
\end{corollary}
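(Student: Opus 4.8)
The plan is to reduce this vector-valued estimate to the scalar Bessel-type bound of Lemma~\ref{l:Fourier-bessel} by working component by component. Writing $u_k=(u_k^1,\dots,u_k^n)$ for the Cartesian components and using $|\widehat u_k(\xi)|^2=\sum_{j=1}^n|\widehat{u_k^j}(\xi)|^2$, I would first interchange the two sums,
\begin{equation*}
\sum_{k=1}^m|\widehat u_k(\xi)|^2=\sum_{j=1}^n\sum_{k=1}^m|\widehat{u_k^j}(\xi)|^2,
\end{equation*}
so that it is enough to bound each inner sum over $k$ by $(2\pi)^{-n}|\Omega|$.

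The decisive step is to verify that for each fixed $j$ the scalar family $\{u_k^j\}_{k=1}^m$ is suborthonormal in the sense of~(\ref{suborth}). I would establish this from two observations. First, for every $\zeta\in\mathbb{C}^m$ the component form $Q_j(\zeta):=\sum_{k,l=1}^m\zeta_k\zeta_l^*(u_k^j,u_l^j)=\bigl\|\sum_{k=1}^m\zeta_k u_k^j\bigr\|_{L_2(\Omega)}^2$ is non-negative, being a Gram quadratic form. Secondly, summing over $j$ and using the $\mathbf{L}_2$-orthonormality in the form $(u_k,u_l)=\sum_{j=1}^n(u_k^j,u_l^j)=\delta_{kl}$ gives
\begin{equation*}
\sum_{j=1}^n Q_j(\zeta)=\sum_{k,l=1}^m\zeta_k\zeta_l^*(u_k,u_l)=\sum_{k=1}^m|\zeta_k|^2.
\end{equation*}
Since the $n$ non-negative quantities $Q_j(\zeta)$ add up to $\sum_k|\zeta_k|^2$, each one satisfies $Q_j(\zeta)\le\sum_k|\zeta_k|^2$, which is precisely condition~(\ref{suborth}) for $\{u_k^j\}_{k=1}^m$.

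Granted suborthonormality of every component family, I would apply Lemma~\ref{l:Fourier-bessel} to $\{u_k^j\}_{k=1}^m$ for each $j$ to obtain $\sum_{k=1}^m|\widehat{u_k^j}(\xi)|^2\le(2\pi)^{-n}|\Omega|$, and then add these $n$ inequalities to reach the asserted bound $(2\pi)^{-n}n|\Omega|$. The only genuinely non-mechanical point---and hence where I expect the crux to lie---is the elementary but slightly hidden implication in the second step: individual non-negativity together with the additive identity forces each component form to be dominated by the full orthonormality form. Everything else is routine unpacking of the Fourier transform and a direct appeal to the preceding lemma; in particular, no cross-orthogonality between distinct $u_k$ is needed at the component level, only positive-semidefiniteness of each Gram matrix plus the trace-type identity obtained by summation over $j$.
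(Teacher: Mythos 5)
Your proof is correct and follows essentially the same route as the paper: reduce to the component families $\{u_k^j\}_{k=1}^m$, show each is suborthonormal, apply Lemma~\ref{l:Fourier-bessel} to each, and sum over $j$. The only difference is that the paper obtains the suborthonormality of the component families by citing Lemma~\ref{l:vec_suborth1} (orthogonal projections preserve suborthonormality, applied to the coordinate projections), whereas you verify the needed special case directly from the non-negativity of the Gram forms $Q_j$ together with the identity $\sum_j Q_j(\zeta)=\sum_k|\zeta_k|^2$ --- a correct inline proof of the same fact.
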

\begin{proof}
By Lemma~\ref{l:vec_suborth1} for  each
$j=1,\dots,n$ the
 family
$\{u_k^j\}_{k=1}^m$ is suborthonornal
and~(\ref{n}) follows from Lemma~\ref{l:Fourier-bessel}.
\end{proof}

The next lemma~\cite{I-FA2009} is essential for the Li--Yau
bounds for the Stokes operator and says  that under the
additional condition~$\div u_k=0$ the factor $n$ in
the previous estimate is replaced by $n-1$.
\begin{lemma}\label{l:n-1}
If the family of vector functions $\{u_k\}_{k=1}^m$ is
orthonormal and $u_k\in H$, then
\begin{equation}\label{n-1}
\sum_{k=1}^m|\widehat {u}_k(\xi)|^2\le
(2\pi)^{-n}\, (n-1)|\Omega|.
\end{equation}
\end{lemma}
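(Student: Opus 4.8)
The plan is to extract the single additional constraint that membership in $H$ places on the Fourier side and to spend it against the crude counting that gave the factor $n$ in the Corollary. The point is that divergence-freeness becomes \emph{transversality} after a Fourier transform: since each $u_k\in H$ satisfies $\div u_k=0$ in the sense of distributions (as one checks by approximating $u_k$ in $\mathbf{L}_2$ by fields in $\mathcal{V}$ and pairing against $\nabla\phi$), we get $i\xi\cdot\widehat{u}_k(\xi)=0$, so that for every $\xi\ne0$ the complex vector $\widehat{u}_k(\xi)\in\mathbb{C}^n$ lies in the $(n-1)$-dimensional subspace orthogonal to $\xi$. The idea is then to decompose $|\widehat{u}_k(\xi)|^2$ along an orthonormal frame of that hyperplane, using only $n-1$ directions rather than the $n$ standard coordinates.

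First I would fix $\xi\ne0$ and pick real unit vectors $e_1(\xi),\dots,e_{n-1}(\xi)\in\mathbb{R}^n$ that, together with $\xi/|\xi|$, form an orthonormal basis of $\mathbb{R}^n$. Because these vectors are real, any $z\in\mathbb{C}^n$ obeys
\[
|z|^2=\sum_{l=1}^{n-1}|z\cdot e_l(\xi)|^2+\frac{|\xi\cdot z|^2}{|\xi|^2}.
\]
Applying this to $z=\widehat{u}_k(\xi)$ and using transversality to annihilate the last term gives
\[
|\widehat{u}_k(\xi)|^2=\sum_{l=1}^{n-1}\bigl|e_l(\xi)\cdot\widehat{u}_k(\xi)\bigr|^2 .
\]
Since $e_l(\xi)$ is a constant vector, $e_l(\xi)\cdot\widehat{u}_k(\xi)$ is exactly the Fourier transform at $\xi$ of the scalar function $w_k^l:=e_l(\xi)\cdot u_k$. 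Summing over $k$ and interchanging the two finite sums reduces the lemma to the single-frame estimate $\sum_{k=1}^m|\widehat{w_k^l}(\xi)|^2\le(2\pi)^{-n}|\Omega|$ for each of the $n-1$ indices $l$.

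To obtain that estimate I would show that for fixed $l$ the scalar family $\{w_k^l\}_{k=1}^m$ is suborthonormal and then quote Lemma~\ref{l:Fourier-bessel}. Suborthonormality comes from Lemma~\ref{l:vec_suborth1} applied to the pointwise orthogonal projection $Q_l u:=(e_l(\xi)\cdot u)\,e_l(\xi)$ onto the fixed unit direction $e_l(\xi)$, which is a genuine orthogonal projection on $\mathbf{L}_2(\Omega)$; hence $\{Q_l u_k\}$ is suborthonormal, and since $e_l(\xi)$ is a unit vector one has $(Q_l u_k,Q_l u_{k'})=(w_k^l,w_{k'}^l)$, so the scalars $w_k^l$ inherit suborthonormality. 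Adding the $n-1$ resulting bounds yields~(\ref{n-1}) for every $\xi\ne0$, and the value at $\xi=0$ follows at once from the continuity of $\xi\mapsto\sum_k|\widehat{u}_k(\xi)|^2$.

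The only genuinely delicate point, and the step I would handle most carefully, is passing from $\xi\cdot\widehat{u}_k(\xi)=0$ in the distributional sense to its validity for \emph{every} $\xi$: this is where finiteness of $|\Omega|$ enters, since $u_k\in\mathbf{L}_2(\Omega)\subset\mathbf{L}_1(\Omega)$ forces $\widehat{u}_k$ to be continuous, so a continuous function vanishing as a distribution vanishes identically. Everything else is a clean repackaging of the Fourier--Bessel inequality of Lemma~\ref{l:Fourier-bessel}, now applied in the $\xi$-adapted frame instead of the fixed coordinate frame, which is precisely what converts the factor $n$ into $n-1$.
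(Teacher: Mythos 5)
Your proof is correct and rests on the same two pillars as the paper's: the transversality relation $\xi\cdot\widehat u_k(\xi)=0$ forced by $u_k\in H$, and the Bessel-type bound of Lemma~\ref{l:Fourier-bessel} applied to $n-1$ suborthonormal scalar families. The difference is in how the reduction to $n-1$ scalar families is organized. The paper first proves the bound for $\xi_0$ on a coordinate axis, where transversality kills the first coordinate component and the remaining coordinate families $\{u_k^j\}_{k}$, $j=2,\dots,n$, are suborthonormal by Lemma~\ref{l:vec_suborth1}; it then treats general $\xi$ by rotating the domain and the fields, $u_\rho(x)=\rho\,u(\rho^{-1}x)$, which requires checking that rotation preserves orthonormality and divergence-freeness and that $\widehat{u_\rho}(\xi)=\rho\,\widehat u(\rho^{-1}\xi)$. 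You instead keep the domain fixed and rotate the frame: at each $\xi\ne0$ you expand $\widehat u_k(\xi)$ in a real orthonormal basis adapted to $\xi$, discard the radial component by transversality, and obtain suborthonormality of the frame components $e_l(\xi)\cdot u_k$ from the pointwise projection $Q_l$, which is indeed an orthogonal projection on $\mathbf{L}_2(\Omega)$ so that Lemma~\ref{l:vec_suborth1} applies (one can also verify (\ref{suborth}) for $\{e_l(\xi)\cdot u_k\}_{k=1}^m$ directly from $\|e_l\cdot v\|\le\|v\|$). The two routes are mathematically equivalent; yours trades the change-of-variables bookkeeping for the (equally routine) projection argument and is arguably cleaner, and both finish with the same continuity argument at $\xi=0$. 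Your explicit upgrade of $\xi\cdot\widehat u_k(\xi)=0$ from a distributional identity to a pointwise one via continuity of $\widehat u_k$ (which uses $|\Omega|<\infty$) is a detail the paper passes over silently, and you handle it correctly.
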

\begin{proof}
First we observe that
$
\xi\cdot \widehat {u}_k(\xi)=
(2\pi)^{-n/2}\,i\int u_k\cdot\nabla_xe^{-i\xi x}\,dx=0
$
for all $\xi\in \mathbb{R}^n_\xi$
since the $u_k$'s are orthogonal to gradients
(see~(\ref{HandV})).
Let $\xi_0\ne0$ be of the form:
\begin{equation}\label{xi0}
\xi_0=(a,0,\dots,0), \qquad a\ne0.
\end{equation}
Since $\xi_0\cdot \widehat {u}_k(\xi_0)=0$, it follows that
$\widehat u_k^1(\xi_0)=0$ for  $k=1,\dots,m$. Hence,
by~(\ref{Fourier-bessel})
$$
\sum_{k=1}^m|\widehat {u}_k(\xi_0)|^2=
\sum_{j=2}^n\sum_{k=1}^m|\widehat {u}_k^j(\xi_0)|^2
\le (2\pi)^{-n}\,(n-1)|\Omega|.
$$
The general case reduces to the case~(\ref{xi0}) by the
corresponding rotation  of
$\mathbb{R}^n$
about the origin represented by the
orthogonal $(n\times n)-$matrix $\rho$.
Given a vector function $u(x)=(u^1(x),\dots,u^n(x))$ we
consider the vector function
$$
u_\rho(x)=\rho\, u(\rho^{-1}x),
\qquad x\in\rho\Omega.
$$
A straight forward calculation gives that
$
\div u_\rho(x)=\div u(y)$, where
$\rho^{-1}x=y$. In addition,
$ (u_\rho,v_\rho)=(u,v)$.
Combining this we obtain that the family
$\{(u_k)_\rho\}_{k=1}^m$ is orthonormal and belongs to
$H(\rho\Omega)$.

Next we calculate $\widehat{u_\rho}$
and see that
$
\widehat{u_\rho}(\xi)=
\rho\widehat{u}(\rho^{-1}\xi).
$
We now fix an arbitrary
$\xi\in\mathbb{R}^n$, $\xi\ne0$ and
set
$\xi_0=(|\xi|,0,\dots,0)$. Let $\rho$ be the
 rotation such that $\xi=\rho^{-1}\xi_0$.
Then we have
$$
\sum_{k=1}^m|\widehat {u}_k(\xi)|^2=
\sum_{k=1}^m|\widehat {u}_k(\rho^{-1}\xi_0)|^2=
\sum_{k=1}^m|\rho^{-1}\widehat {(u_k)_\rho}(\xi_0)|^2=
\sum_{k=1}^m|\widehat {(u_k)_\rho}(\xi_0)|^2
\le(2\pi)^{-n}\, (n-1)|\Omega|,
$$
where we have used
 that inequality~(\ref{n-1}) has been proved for $\xi$ of
the form~(\ref{xi0}) for any orthonormal family
of divergence-free vector functions.
Finally, the estimate (\ref{n-1}) is extended to $\xi=0$ by
continuity.
\end{proof}

For the orthonormal family
$\{u_k\}_{k=1}^m\in H$ we set
\begin{equation}\label{Fstokes}
F_\mathrm{S}(\xi)=\sum_{k=1}^m|\widehat{u}_k(\xi)|^2.
\end{equation}
\begin{lemma}\label{l:gradF}
The following inequality holds:
\begin{equation}\label{gradF}
|\nabla F_\mathrm{S}(\xi)|\le2(2\pi)^{-n}(n(n-1))^{1/2}
\sqrt{|\Omega|I}.
\end{equation}
\end{lemma}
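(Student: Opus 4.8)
The plan is to mimic, in the vector-valued setting, the pointwise gradient estimate that underlies the scalar bound $|\nabla F|\le L$, and to produce the factor $(n(n-1))^{1/2}$ from two distinct sources: the divergence-free Bessel bound of Lemma~\ref{l:n-1} (supplying the $n-1$) and an unconstrained component-wise Bessel bound (supplying the $n$). First I would fix $\xi\in\mathbb{R}^n$ and a unit vector $e\in\mathbb{R}^n$ and study the directional derivative $e\cdot\nabla F_{\mathrm{S}}(\xi)$. Writing $c_k^j=\widehat{u_k^j}(\xi)$ and differentiating under the Fourier integral, one has $\partial_{\xi_l}\widehat{u_k^j}(\xi)=\widehat{-ix_l u_k^j}(\xi)$, so that $\sum_l e_l\,\partial_{\xi_l}\widehat{u_k^j}(\xi)=\widehat{g_k^j}(\xi)$ with $g_k^j(x)=-i(e\cdot x)u_k^j(x)$. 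Since $\partial_{\xi_l}|c_k^j|^2=2\Re(\overline{c_k^j}\,\partial_{\xi_l}c_k^j)$, this gives
\[
e\cdot\nabla F_{\mathrm{S}}(\xi)=2\,\Re\sum_{k=1}^m\sum_{j=1}^n \overline{c_k^j}\,\widehat{g_k^j}(\xi).
\]

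Next I would apply the Cauchy--Schwarz inequality in the $(k,j)$ indices to split the right-hand side as
\[
|e\cdot\nabla F_{\mathrm{S}}(\xi)|\le 2\Bigl(\sum_{k,j}|c_k^j|^2\Bigr)^{1/2}\Bigl(\sum_{k,j}|\widehat{g_k^j}(\xi)|^2\Bigr)^{1/2}=2\,F_{\mathrm{S}}(\xi)^{1/2}\Bigl(\sum_{k,j}|\widehat{g_k^j}(\xi)|^2\Bigr)^{1/2}.
\]
The first factor is controlled by the divergence-free estimate $F_{\mathrm{S}}(\xi)\le M=(2\pi)^{-n}(n-1)|\Omega|$ of Lemma~\ref{l:n-1}. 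For the second factor, note that $\widehat{g_k^j}(\xi)=-i(2\pi)^{-n/2}(u_k^j,\psi_\xi)$ with $\psi_\xi(x)=(e\cdot x)e^{i\xi x}$. For each fixed $j$ the scalar family $\{u_k^j\}_{k=1}^m$ is suborthonormal by Lemma~\ref{l:vec_suborth1} (it is the orthogonal projection of the orthonormal family $\{u_k\}$ onto the $j$-th coordinate slot), so Bessel's inequality from Lemma~\ref{l:bessel} gives $\sum_k|(u_k^j,\psi_\xi)|^2\le\|\psi_\xi\|_{L_2(\Omega)}^2=\int_\Omega(e\cdot x)^2\,dx$. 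Summing over the $n$ components and using $(e\cdot x)^2\le|x|^2$ yields
\[
\sum_{k,j}|\widehat{g_k^j}(\xi)|^2\le (2\pi)^{-n}\,n\int_\Omega(e\cdot x)^2\,dx\le (2\pi)^{-n}\,n\,I.
\]

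Combining the two estimates gives $|e\cdot\nabla F_{\mathrm{S}}(\xi)|\le 2\bigl((2\pi)^{-n}(n-1)|\Omega|\bigr)^{1/2}\bigl((2\pi)^{-n}nI\bigr)^{1/2}=2(2\pi)^{-n}(n(n-1))^{1/2}\sqrt{|\Omega|I}$, which is precisely the asserted value of $L$. Since this holds for every unit vector $e$, choosing $e=\nabla F_{\mathrm{S}}(\xi)/|\nabla F_{\mathrm{S}}(\xi)|$ (the claim being trivial where the gradient vanishes) completes the argument.

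The genuinely delicate point is the bookkeeping of the two factors $n-1$ and $n$: the factor $n-1$ \emph{must} come from Lemma~\ref{l:n-1} applied to $F_{\mathrm{S}}$ itself, whereas the component sum in the $g$-term cannot exploit the divergence-free improvement and must be estimated slot-by-slot, producing the full $n$; it is the product under the square root that yields $(n(n-1))^{1/2}$ rather than $n-1$ or $n$. A minor technical check is the legitimacy of differentiation under the Fourier integral and the membership $\psi_\xi\in L_2(\Omega)$, both of which rest on the assumption $I=\int_\Omega|x|^2\,dx<\infty$.
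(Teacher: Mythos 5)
Your argument is correct and is essentially the paper's proof: the same splitting of $\nabla F_{\mathrm S}$ via Cauchy--Schwarz into a factor controlled by the divergence-free bound $F_{\mathrm S}\le(2\pi)^{-n}(n-1)|\Omega|$ and a factor controlled by component-wise suborthonormality plus Bessel's inequality, yielding $(2\pi)^{-n}nI$. The only cosmetic difference is that you estimate the directional derivative $e\cdot\nabla F_{\mathrm S}$ with $\int_\Omega(e\cdot x)^2dx\le I$, whereas the paper sums $|\partial_j F_{\mathrm S}|^2$ over $j$ and uses $\sum_j\int_\Omega x_j^2dx=I$; both give the identical constant.
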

\begin{proof}
The proof is similar to that in~\cite{Melas}
for the Laplacian. We have
$$
\partial_{j}\widehat{u}_k^l(\xi)=
-(2\pi)^{-n/2}i\int_{\Omega} u_k^l(x)x_je^{-i\xi x}dx.
$$
Since the family $\{u_k^l\}_{k=1}^m$ is subothonormal,
by Lemma~\ref{l:Fourier-bessel} we have
$$
\sum_{k=1}^m|\partial_{j}\widehat{u}_k^l(\xi)|^2\le
(2\pi)^{-n}\int_{\Omega} x_j^2dx
$$
and
$$
\sum_{k=1}^m|\nabla\widehat{u}_k(\xi)|^2\le
(2\pi)^{-n}\,n\int_{\Omega}x^2dx=
(2\pi)^{-n}\,nI(\Omega),
\qquad I(\Omega)=\int_\Omega x^2dx.
$$
Next, using~(\ref{n-1}) we obtain
$$
|\nabla F_\mathrm{S}(\xi)|\le2
\biggl(\sum_{k=1}^m|\widehat{u}_k(\xi)|^2\biggr)^{1/2}
\biggl(\sum_{k=1}^m|\nabla\widehat{u}_k(\xi)|^2\biggr)^{1/2}\le
2(2\pi)^{-n}(n(n-1))^{1/2}
\sqrt{|\Omega|I}.
$$
\end{proof}
If, in addition, the orthonormal family
$\{u_k\}_{k=1}^m$ belongs to
$V\subseteq\{u\in\mathbf{H}^1_0(\Omega),\ \div u=0\}$,
then, by the Plancherel theorem, the function
$F_\mathrm{S}(\xi)$ defined in~(\ref{Fstokes}) satisfies
\begin{equation}\label{enumSt}
\aligned
0\le F_\mathrm{S}(\xi)\le
    M_\mathrm{S}&=(2\pi)^{-n}(n-1)|\Omega|;\\
|\nabla F_\mathrm{S}(\xi)|\le
    L_\mathrm{S}&=2(2\pi)^{-n}(n(n-1))^{1/2}
\sqrt{|\Omega|I};\\
\int F_\mathrm{S}(\xi)\,d\xi&=m;\\
\int |\xi|^2F_\mathrm{S}(\xi)\,d\xi&=
            \sum_{k=1}^m\|\nabla u_k\|^2.
\endaligned
\end{equation}

In the case of the Laplace operator, that is,
for an orthonormal family
$\{\vphi_k\}_{k=1}^m\in H^1_0(\Omega)$ the corresponding
function
$
F_\mathrm{L}(\xi)=\sum_{k=1}^m|\widehat{\vphi}_k(\xi)|^2
$
satisfies \cite{Li-Yau}, \cite{Melas}
\begin{equation}\label{enumLap}
\aligned
0\le F_\mathrm{L}(\xi)\le
    M_\mathrm{L}&=(2\pi)^{-n}|\Omega|;\\
|\nabla F_\mathrm{L}(\xi)|\le
    L_\mathrm{L}&=2(2\pi)^{-n}\sqrt{|\Omega|I};\\
\int F_\mathrm{L}(\xi)\,d\xi&=m;\\
\int |\xi|^2F_\mathrm{L}(\xi)\,d\xi&=
            \sum_{k=1}^m\|\nabla \vphi_k\|^2.
\endaligned
\end{equation}

\setcounter{equation}{0}
\section{Minimization problem}\label{S:Min-problem}

There is not much difference now between the Laplace and
the Stokes operators, and the problem
of lower bounds for the eigenvalues reduces to the
problem of finding $\Sigma_{M,L}(m)$ defined in
the minimization problem~(\ref{min-prob1}).

We consider the symmetric-decreasing rearrangement
$F^*(\xi)$ of the $F(\xi)$. It is well known
(see, for example, \cite{Talenti}) that
$0\le F^*(\xi)\le M$,
$\int F^*(\xi)\,d\xi=\int F(\xi)\,d\xi=m$
and, in addition,
$|\nabla F^*(\xi)|\le \mathrm{ess\ sup}|\nabla F(\xi)|$.
Also,
\begin{equation}\label{H-L}
\int |\xi|^2F(\xi)d\xi\ge\int |\xi|^2F^*(\xi)d\xi.
\end{equation}
This inequality follows from the Hardy--Littlewood inequality
$$
\int G(\xi)F(\xi)d\xi\le
\int G^*(\xi)F^*(\xi)d\xi,
$$
where $G(\xi)=G^*(\xi)=R^2-|\xi|^2$ and
without loss of generality we assume that
the ball $B_R$ contains the supports of
$F$ and $F^*$.

Thus, we obtain a one-dimensional problem
equivalent
to~(\ref{min-prob1}):
\begin{equation}\label{onemin}
\aligned
&\sigma_n\int_0^\infty r^{n+1}F(r)dr\to\inf=:
\Sigma_{M,L}(m),\\
0\le F(r)\le M,
\qquad &\sigma_n\int_0^\infty r^{n-1}F(r)dr=m,\qquad
-F'(r)\le L,
\endaligned
\end{equation}
where $F(r)$ is decreasing and without loss of generality
we assume that $F$ is absolutely continuous.

We consider the function $\Phi_s(r)$
shown in Fig.~\ref{Fs}:
\begin{equation}\label{Phi}
\Phi_s(r)=\left\{
\begin{array}{lll}
    M, &\text{for}\ {0\le r\le s;} \\
    M-Lr, &\text{for}\ {s\le r\le s+ M/L;} \\
    0, &\text{for}\ {s+M/L\le r.} \\
\end{array}
\right.
\end{equation}
\begin{lemma}\label{l:min}
Suppose that $\int_0^\infty r^\alpha\Phi_s(r)dr=m^*$ and
$\beta\ge\alpha$. Then for any decreasing
absolutely continuous function $F$ satisfying
the conditions
$$
0\le F\le M,\qquad
\int_0^\infty r^\alpha F(r)dr=m^*,
\qquad
-F'\le L,
$$
the following inequality holds:
\begin{equation}\label{alphabeta}
\int_0^\infty r^\beta F(r)dr\ge
\int_0^\infty r^\beta\Phi_s(r)dr.
\end{equation}
\end{lemma}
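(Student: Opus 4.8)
The plan is to compare $F$ directly with the profile $\Phi_s$ by analysing the sign of the difference $G(r):=F(r)-\Phi_s(r)$. Everything rests on a single-crossing property: $G$ is non-positive for small $r$ and non-negative for large $r$, changing sign at most once. Once this is in place, the weighted inequality~\eqref{alphabeta} follows from an elementary weight-exchange argument.

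First I would establish the sign structure of $G$ piece by piece, following the three regimes in the definition~\eqref{Phi}. On $[0,s]$ we have $\Phi_s\equiv M$, and since $0\le F\le M$ the difference satisfies $G\le 0$. On the sloping piece $[s,s+M/L]$ the derivative of $\Phi_s$ equals $-L$, so $G'(r)=F'(r)-\Phi_s'(r)=F'(r)+L\ge 0$ almost everywhere, because the admissibility condition $-F'\le L$ is precisely $F'\ge -L$; hence $G$ is non-decreasing on this interval. Finally, on $[s+M/L,\infty)$ we have $\Phi_s\equiv 0$, so $G=F\ge 0$. Since $G$ is continuous (both $F$ and $\Phi_s$ are absolutely continuous), starts $\le 0$, is monotone non-decreasing across the middle interval, and ends $\ge 0$, there is a threshold $r_0\in[s,s+M/L]$ with $G(r)\le 0$ on $[0,r_0]$ and $G(r)\ge 0$ on $[r_0,\infty)$.

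With the crossing point $r_0$ in hand, the conclusion follows from a weight-exchange inequality. For every $r\ge 0$ the quantity $r^{\beta-\alpha}-r_0^{\beta-\alpha}$ has the same sign as $r-r_0$ (here I use $\beta\ge\alpha$ and $r\ge 0$, so $t\mapsto t^{\beta-\alpha}$ is non-decreasing), which by construction is the same sign as $G(r)$. Therefore
\[
\bigl(r^{\beta-\alpha}-r_0^{\beta-\alpha}\bigr)\,r^\alpha G(r)\ge 0\qquad\text{for all }r\ge 0,
\]
i.e.\ $r^\beta G(r)\ge r_0^{\beta-\alpha}\,r^\alpha G(r)$ pointwise. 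Integrating over $[0,\infty)$ and using that the $\alpha$-moments of $F$ and $\Phi_s$ agree, so that $\int_0^\infty r^\alpha G(r)\,dr=m^*-m^*=0$, gives
\[
\int_0^\infty r^\beta F(r)\,dr-\int_0^\infty r^\beta\Phi_s(r)\,dr=\int_0^\infty r^\beta G(r)\,dr\ge r_0^{\beta-\alpha}\int_0^\infty r^\alpha G(r)\,dr=0,
\]
which is exactly~\eqref{alphabeta}.

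The only delicate step is pinning down the single crossing; once the sign of $G$ is known the moment comparison is automatic. I would pay particular attention to the degenerate configurations — $s=0$ (no plateau), $G\equiv 0$, or $G$ of one sign throughout — but in each case any admissible choice of $r_0$ (for instance $r_0=\sup\{r:G(r)\le 0\}$, read as $0$ or $\infty$ in the extreme cases) keeps the pointwise inequality valid, so no separate treatment is needed. It is worth noting that the argument never uses the precise value of $m^*$, only that the $\alpha$-moments match; this is what will later let the same lemma be applied with $(\alpha,\beta)=(n-1,\,n+1)$ to the minimization problem~\eqref{onemin}.
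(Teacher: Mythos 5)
Your proof is correct and follows essentially the same route as the paper: establish that $F-\Phi_s$ changes sign exactly once, at some point in the sloped region, and then exploit the monotonicity of $r\mapsto r^{\beta-\alpha}$ together with the matching $\alpha$-moments (the paper chains the inequalities $\int_0^a r^\beta(\Phi_s-F)\,dr\le a^{\beta-\alpha}\int_0^a r^\alpha(\Phi_s-F)\,dr=a^{\beta-\alpha}\int_a^\infty r^\alpha(F-\Phi_s)\,dr\le\int_a^\infty r^\beta(F-\Phi_s)\,dr$, which is your pointwise weight-exchange integrated). If anything, your justification of the single crossing via $G'=F'+L\ge0$ on $[s,s+M/L]$ is slightly more careful than the paper's geometric assertion that the two graphs meet at exactly one point.
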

\begin{proof}
If $F$ is an admissible function and
$F(s)=\Phi_s(s)\ (=M)$, then
$F\equiv\Phi_s$. Hence for any admissible function
$F$ such that
$F\ne \Phi_s$ (and, hence,
$F(r_0)<M=\Phi_s(r_0)$  at some point $r_0$,
$0\le r_0 < s$), the graph of $F$
intersects the graph of $\Phi_s$
to the right of $r_0$
at exactly one point with $r$-coordinate $a$,
where $a$ is in the region $s<a<s+M/L$.
In other words,
 $F(r)\le \Phi_s(r)$ for $0\le r\le a$ and
$F(r)\ge \Phi_s(r)$ for $a\le r<\infty$.
 Therefore
$$
\aligned
\int_0^ar^\beta(\Phi_s(r)-F(r))dr\le
a^{\beta-\alpha}\int_0^ar^\alpha(\Phi_s(r)-F(r))dr=\\
a^{\beta-\alpha}\int_a^\infty r^\alpha(F(r)-\Phi_s(r))dr\le
\int_a^\infty r^\beta(F(r)-\Phi_s(r))dr,
\endaligned
$$
where the  functions under the integral sings are
non-negative.
\end{proof}
\begin{lemma}\label{l:calcul}
By a straight forward calculation
\begin{equation}\label{calcul}
\int_0^\infty r^\gamma\Phi_s(r)dr=
\frac{M^{\gamma+2}}{(\gamma+1)(\gamma+2)L^{\gamma+1}}
\bigl((t+1)^{\gamma+2}-t^{\gamma+2}\bigr),
\qquad s=\frac{tM}L.
\end{equation}
\end{lemma}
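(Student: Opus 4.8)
The plan is to evaluate the integral directly, since the only content is bookkeeping for the piecewise-linear profile $\Phi_s$. The function equals $M$ on $[0,s]$, descends with constant slope $-L$ on $[s,\,s+M/L]$ (so that it is continuous, running from $M$ at $r=s$ down to $0$ at $r=s+M/L$, as in the figure and as used in Lemma~\ref{l:min}), and vanishes for $r\ge s+M/L$. Abbreviating $b=s+M/L$, the tail over $[b,\infty)$ contributes nothing, and I would split
$$
\int_0^\infty r^\gamma\Phi_s(r)\,dr = M\int_0^s r^\gamma\,dr + \int_s^{b} r^\gamma\bigl(M-L(r-s)\bigr)\,dr .
$$

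The first term is $Ms^{\gamma+1}/(\gamma+1)$. For the second, I would write the linear factor as $M-L(r-s)=(M+Ls)-Lr$ and integrate term by term, obtaining
$$
(M+Ls)\,\frac{b^{\gamma+1}-s^{\gamma+1}}{\gamma+1}-L\,\frac{b^{\gamma+2}-s^{\gamma+2}}{\gamma+2}.
$$
Now I substitute $s=tM/L$, which gives $b=(t+1)M/L$ and the pleasant identity $M+Ls=M(t+1)$. Every power $s^{\gamma+j}$ and $b^{\gamma+j}$ carries a factor $(M/L)^{\gamma+j}$, so after pulling out the common factor $M^{\gamma+2}/L^{\gamma+1}$ the three contributions reduce to a dimensionless expression in $t$ alone.

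The last step is to collect these contributions and track the cancellations. After factoring out $M^{\gamma+2}/L^{\gamma+1}$, the constant piece yields $t^{\gamma+1}/(\gamma+1)$, the $(M+Ls)$-piece yields $\frac{1}{\gamma+1}\bigl((t+1)^{\gamma+2}-t^{\gamma+1}(t+1)\bigr)$, and the $-L$-piece yields $-\frac{1}{\gamma+2}\bigl((t+1)^{\gamma+2}-t^{\gamma+2}\bigr)$. Expanding $t^{\gamma+1}(t+1)=t^{\gamma+2}+t^{\gamma+1}$, the two $t^{\gamma+1}$ terms cancel and the first two pieces collapse to $\frac{1}{\gamma+1}\bigl((t+1)^{\gamma+2}-t^{\gamma+2}\bigr)$; combining with the third piece and using $\frac{1}{\gamma+1}-\frac{1}{\gamma+2}=\frac{1}{(\gamma+1)(\gamma+2)}$ produces exactly the claimed formula. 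There is no genuine obstacle here beyond keeping the algebra honest; the only place one could slip is forgetting the simplification $M+Ls=M(t+1)$, which is precisely what makes the $t^{\gamma+1}$ terms cancel and the final coefficient come out clean.
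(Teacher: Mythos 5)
Your computation is correct and is exactly the ``straightforward calculation'' the paper omits: split the integral at $r=s$ and $r=s+M/L$, substitute $s=tM/L$, and simplify. You also rightly read the middle piece of $\Phi_s$ as $M-L(r-s)$ (the continuous profile of the figure) rather than the paper's literal $M-Lr$, which is a typo; with that reading your algebra checks out and reproduces \eqref{calcul}.
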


Combining the above results we see that the minimizing
function is given by~(\ref{Phi})
\begin{figure}[htb]
\unitlength=0.5mm
\linethickness{0.2mm}
\begin{center}
\begin{picture}(140,80)
\put(60,40){\line(1,-1){39.8}}
\put(0,0){\vector(1,0){140}}
\put(0,0){\vector(0,1){70}}
\put(0,40){\line(1,0){60}}
\put(135,-8){$r$}
\put(60,-8){$s$}
\put(-10,40){$M$}
\put(90,-8){$s+M/L$}
\put(70,38){$\text{slope } -\!L$}
\multiput(60,0)(0,3){14}%
{\circle*{0.4}}
\end{picture}
\end{center}
\caption{ Minimizer $\Phi_{s}(|\xi|)$}
\label{Fs}
\end{figure}
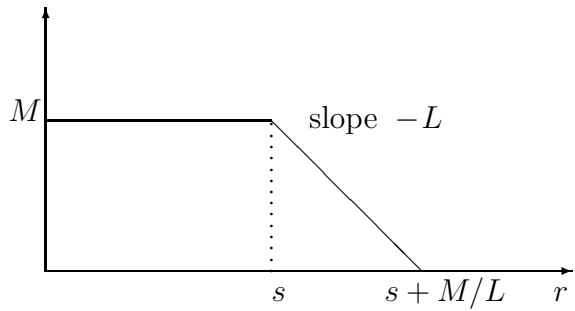
and the second condition
in~(\ref{onemin}) becomes
$\sigma_n\int_0^\infty r^{n-1}\Phi_s(r)dr=m$, which
in view of~(\ref{calcul}) gives the equation for $t$
(and $s$):
\begin{equation}\label{eq-for-t}
(t+1)^{n+1}-t^{n+1}=
m\frac{n(n+1)L^n}{\sigma_nM^{n+1}}=
m\frac{(n+1)L^n}{\omega_nM^{n+1}}=:m_*.
\end{equation}
It will be shown (see~(\ref{m*})) that for $m\ge1$ the
right-hand side in~(\ref{eq-for-t}) is greater than $1$.
Since the left-hand side is a polynomial of order $n$
(with positive coefficients) monotonely increasing from $1$
to $\infty$ on $\mathbb{R}^+$, the equation~(\ref{eq-for-t})
has a unique solution $t=t(m_*)\ge 0$.
Using~(\ref{calcul}) this time  with $\gamma=n+1$
we  find the solution of~(\ref{min-prob1}),
that is, $\Sigma_{M,L}(m)$. In other words, we have just
proved the following result.
\begin{proposition}\label{l:min-prob}
The solution of the minimization
problem~$(\ref{min-prob1})$ is
given by
\begin{equation}\label{Sigma}
\Sigma_{M,L}(m)=
\frac{\sigma_nM^{n+3}}{(n+2)(n+3)L^{n+2}}
\bigl((t(m_*)+1)^{n+3}-t(m_*)^{n+3}\bigr),
\end{equation}
where $t(m_*)$ is the unique positive root of the
equation~$(\ref{eq-for-t})$.
\end{proposition}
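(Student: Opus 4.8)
The plan is to exploit the two comparison lemmas already established and reduce everything to an elementary evaluation, so that the proposition follows as pure assembly. First I would invoke the rearrangement reduction: the symmetric-decreasing rearrangement $F^*$ of any admissible $F$ in~(\ref{min-prob1}) preserves the mass constraint, does not increase the Lipschitz bound $L$, and can only decrease $\int|\xi|^2F\,d\xi$ by the Hardy--Littlewood inequality~(\ref{H-L}). Hence it suffices to solve the one-dimensional problem~(\ref{onemin}) over decreasing, absolutely continuous profiles $F(r)$.

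Next I would identify the minimizer as the explicit profile $\Phi_s$ from~(\ref{Phi}). The key is Lemma~\ref{l:min} applied with $\alpha=n-1$ and $\beta=n+1$ (so $\beta\ge\alpha$): it says that among all admissible decreasing profiles sharing the same weighted mass $\int_0^\infty r^{n-1}F\,dr$, the function $\Phi_s$ minimizes $\int_0^\infty r^{n+1}F\,dr$, provided $s$ is chosen so that $\Phi_s$ itself meets the mass constraint. Thus the infimum in~(\ref{onemin}) is attained at $\Phi_s$, and the whole problem collapses to fixing the single free parameter $s$, equivalently $t=sL/M$.

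I would then pin down $t$ by imposing the mass constraint on $\Phi_s$. Using the closed-form integral from Lemma~\ref{l:calcul} with $\gamma=n-1$, the condition $\sigma_n\int_0^\infty r^{n-1}\Phi_s\,dr=m$ becomes the algebraic equation~(\ref{eq-for-t}), namely $(t+1)^{n+1}-t^{n+1}=m_*$. Existence and uniqueness of the root $t(m_*)\ge0$ follow because the left-hand side is a degree-$n$ polynomial with positive coefficients, strictly increasing from $1$ to $+\infty$ on $[0,\infty)$, while the right-hand side exceeds $1$ (one checks $m_*>1$ for $m\ge1$). Finally, substituting $\gamma=n+1$ into Lemma~\ref{l:calcul} and evaluating at this $t$ yields exactly the formula~(\ref{Sigma}).

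As for the difficulty: given the two lemmas the argument presents no real obstacle at this stage. The genuine content lives earlier---in the rearrangement step ensuring $|\nabla F^*|\le L$, and in the one-sided comparison of Lemma~\ref{l:min}, whose proof hinges on the single crossing point of $F$ and $\Phi_s$ together with the weight inequality $r^{\beta-\alpha}\le a^{\beta-\alpha}$ on $[0,a]$ and its reverse on $[a,\infty)$. Once those are in hand, matching the constraint and reading off the value are routine, and nothing distinguishes the Laplace and Stokes cases beyond the values of $M$ and $L$.
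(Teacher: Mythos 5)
Your proposal is correct and follows essentially the same route as the paper: rearrangement via the Hardy--Littlewood inequality to reduce to the radial one-dimensional problem~(\ref{onemin}), identification of the minimizer $\Phi_s$ through Lemma~\ref{l:min} with $\alpha=n-1$, $\beta=n+1$, determination of $t$ from the mass constraint via Lemma~\ref{l:calcul} with $\gamma=n-1$ (including the monotonicity argument for uniqueness of the root and the check $m_*\ge1$), and evaluation with $\gamma=n+1$ to obtain~(\ref{Sigma}). Nothing is missing.
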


\begin{remark}\label{Rem:Weidl1}
{\rm The shape of the minimizer~(\ref{Phi}) was found
in~\cite{Weidl}. We use it here to find the exact
solution~(\ref{Sigma}) of the minimization
problem~(\ref{min-prob1}).
}
\end{remark}

We give explicit expressions for $\Sigma_{M,L}(m)$
(and thereby explicit lower bounds for sums
of eigenvalues of the Laplace and Stokes operators)
for the dimension $n=2,3,4$ in \S\ref{S:234}.
Meanwhile we obtain the asymptotic expansion
for~$\Sigma_{M,L}(m)$ valid for all dimensions $n$.

First, it is convenient to write
the right-hand side in~(\ref{eq-for-t}) in the form
\begin{equation}\label{eta}
(t+1)^{n+1}-t^{n+1}=
(\eta+1/2)^{n+1}-(\eta-1/2)^{n+1},
\qquad \eta=t+1/2,
\end{equation}
since this substitution kills half of the coefficients
in the explicit expression for the polynomial.
Then the equation~(\ref{eta}) takes the form
$$
(n+1)\biggl(\eta^n+\frac{n(n-1)}{24}\eta^{n-2}+
\frac{n(n-1)(n-2)(n-3)}{1920}\eta^{n-4}+\dots
\biggr)=m_*.
$$
The unique positive root $\eta(m_*)$ of this equation
has the asymptotic expansion
\begin{equation}\label{eta-asymp}
\eta(m_*)=\biggl(\frac{m_*}{n+1}\biggr)^{1/n}-
\frac{n-1}{24}\biggl(\frac{m_*}{n+1}\biggr)^{-1/n}+
\frac{(n-1)(n-3)(2n-1)}{5760}
\biggl(\frac{m_*}{n+1}\biggr)^{-3/n}+\dots\,.
\end{equation}
The first term here is obvious, the second and the third
terms can be found in the standard way.
Therefore substituting~(\ref{eta-asymp}) into
the second factor in~(\ref{Sigma}) we obtain
\begin{equation}\label{sigma-asymp}
\aligned
(t(m_*)+1)^{n+3}-t(m_*)^{n+3}=
(\eta(m_*)+1/2)^{n+3}-(\eta(m_*)-1/2)^{n+3}=\\
(n+3)\biggl[\biggl(\frac{m_*}{n+1}\biggr)^{1+2/n}+
\frac{(n+2)}{12}\frac{m_*}{n+1}-
\frac{(n-1)(n+2)(3n+2)}{1440}
\biggl(\frac{m_*}{n+1}\biggr)^{1-2/n}+\dots\biggr],
\endaligned
\end{equation}
and then~(\ref{Sigma}) along with the expression for
$m_*$ in (\ref{eq-for-t}) finally gives~(\ref{Sigma-asymp1}).

\begin
{proof}[Proof of Theorem~\ref{T:L-S-asymp1}]
The difference between the Laplace and Stokes
operators is now only in the definition of
$M$ and $L$ and we consider the case of the Stokes
operator.
Since
$$
  \sum_{k=1}^m\|\nabla u_k\|^2=
\int |\xi|^2F_\mathrm{S}(\xi)\,d\xi\ge
\Sigma_{M,L}(m),
$$
it remains to substitute into~(\ref{Sigma-asymp1})
$ M_\mathrm{S}$ and  $L_\mathrm{S}$ from~(\ref{enumSt}).
This gives that $ \sum_{k=1}^m\|\nabla u_k\|^2\ge
\text{r.\,h.\,s\, of~(\ref{Stasymp1})}$
and inequality~(\ref{Stasymp1}) follows by taking
the first normalized eigenvectors of the Stokes problem
for the $u_k$'s. The proof of~(\ref{Lapasymp1})
is totally similar.
\end{proof}

We conclude this section by checking that both for the
Laplace and Stokes operators $m_*\ge1$, that is,
\begin{equation}\label{m*}
    \frac{(n+1)L^n}{\omega_nM^{n+1}}\,\ge\,1.
\end{equation}
(Geometrically this means that $\Phi_s$ always
has a horizontal part.)
This follows from the inequality
\begin{equation}\label{IgeOm}
I=\int_\Omega |x|^2dx\ge
\frac {n|\Omega|^{1+2/n}}{(n+2)\omega_n^{2/n}}\,,
\end{equation}
which, in turn, is~(\ref{H-L}) with $F$ being the
characteristic function of $\Omega$.
In fact,~(\ref{IgeOm}) and the formulas for $M$ and $L$
 give much more than~(\ref{m*}):
\begin{equation}\label{m*n}
m_*\ge m_0^{\mathrm{L}}=\frac{(n+1)(4\pi)^n}{\omega_n^2}
\biggl(\frac n{n+2}\biggr)^{n/2},\
m_*\ge m_0^{\mathrm{S}}=\frac{(n+1)(4\pi)^n}{(n-1)\omega_n^2}
\biggl(\frac {n^2}{(n-1)(n+2)}\biggr)^{n/2}
\end{equation}
for the Laplace and Stokes operators, respectively,
in the sense that the right-hand sides in~(\ref{m*n})
tend to infinity as $n\to\infty$.

\setcounter{equation}{0}
\section{Lower bounds for the Laplace and Stokes operators
for $n=2,3,4$}\label{S:234}

\subsection*{The case $n=2$}
The two-dimensional case is the simplest and the results
are the most complete.
\begin{lemma}\label{l:Sigman=2}
In the two-dimensional case
\begin{equation}\label{Sigma2}
\Sigma_{M,L}(m)=
\frac1{2\pi M}\,m^2\,+\,\frac{M^2}{6L^2}\,m-
\frac{\pi M^5}{90 L^4}.
\end{equation}
\end{lemma}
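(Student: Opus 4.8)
The plan is to specialize the exact solution from Proposition~\ref{l:min-prob}. The key simplification in dimension $n=2$ is that the defining equation~(\ref{eq-for-t}) for $t$ becomes $(t+1)^3-t^3=m_*$, i.e.\ a \emph{quadratic}, so the root $t(m_*)$ — and hence every quantity built from it — can be written down in closed form. I would not solve for $t$ directly, but instead work with the shifted variable $\eta=t+1/2$ of~(\ref{eta}), which kills the odd powers and keeps the algebra short.

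First I would record the dimensional constants $\sigma_2=2\pi$ and $\omega_2=\pi$, so that~(\ref{eq-for-t}) reads $m_*=3L^2m/(\pi M^3)$, and formula~(\ref{Sigma}) (taken at $\gamma=n+1=3$) becomes
\[
\Sigma_{M,L}(m)=\frac{\pi M^5}{10L^4}\bigl((t+1)^5-t^5\bigr).
\]
Passing to $\eta=t+1/2$, the cubic difference collapses to $(t+1)^3-t^3=3\eta^2+\tfrac14$, so the equation for $t$ supplies the explicit value $\eta^2=\tfrac{m_*}{3}-\tfrac1{12}$. In the same way the quintic difference collapses to $(t+1)^5-t^5=5\eta^4+\tfrac52\eta^2+\tfrac1{16}$, a polynomial in $\eta^2$ alone.

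It then remains to substitute $\eta^2=\tfrac{m_*}{3}-\tfrac1{12}$ into this expression and collect terms. A direct computation yields
\[
(t+1)^5-t^5=\tfrac19\bigl(5m_*^2+5m_*-1\bigr),
\]
and inserting this together with $m_*=3L^2m/(\pi M^3)$ into the displayed formula for $\Sigma_{M,L}(m)$ produces exactly the three terms $\frac1{2\pi M}m^2$, $\frac{M^2}{6L^2}m$ and $-\frac{\pi M^5}{90L^4}$ of~(\ref{Sigma2}). The only delicate point is the bookkeeping of binomial coefficients, but the $\eta$-substitution keeps this minimal since all odd powers vanish, so there is no genuine analytic obstacle. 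I would close by observing that, because the numerator is an \emph{exact} polynomial of degree two in $m_*$, no further negative powers of $m$ can appear; this is precisely the reason the exact answer coincides with its asymptotic expansion~(\ref{Sigma-asymp1}) in this dimension.
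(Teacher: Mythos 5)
Your proposal is correct and takes essentially the same route as the paper: specialize Proposition~\ref{l:min-prob} to $n=2$, solve the quadratic $(t+1)^3-t^3=m_*$ via the shift $\eta=t+1/2$ of~(\ref{eta}), compute $(t+1)^5-t^5=\frac59 m_*^2+\frac59 m_*-\frac19$, and substitute back. All your intermediate constants ($\sigma_2=2\pi$, the prefactor $\pi M^5/(10L^4)$, $m_*=3L^2m/(\pi M^3)$) check out, and your closing observation that the exact answer equals $\Sigma_0(m)_{n=2}$ is also noted in the paper.
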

\begin{proof} In view of~(\ref{Sigma}) we only need to
calculate the last factor there. The positive root
$t(m_*)$ of the equation~$(\ref{eq-for-t})_{n=2}$,
which is the quadratic equation $(t+1)^3-t^3=m_*$, is
\begin{equation}\label{root2}
t(m_*)=\sqrt{\frac{m_*}{3}-\frac1{12}}-\frac12
\end{equation}
and using~(\ref{eta}) we obtain
$$
(t(m_*)+1)^{5}-t(m_*)^{5}=
\frac59\,m_*^2+\frac59\,m_*-\frac19\,.
$$
The rest is a direct substitution. We note that
$\Sigma_{M,L}(m)=\Sigma_0(m)_{n=2}$, see~(\ref{Sigma-asymp1}).
\end{proof}
\begin{theorem}\label{T:n=2}
For $n=2$
the eigenvalues of the Laplace and Stokes operators satisfy
\begin{eqnarray}
  \sum_{k=1}^m\mu_k&\ge& \frac{2\pi}{|\Omega|}\,m^2+
 \frac1{24}\frac{|\Omega|}I\,m \biggl(1-\frac1{120m}\biggr)
  \ge
  \frac{2\pi}{|\Omega|}\,m^2+ \frac1{24}\,
  \frac{119}{120}\frac{|\Omega|}I\,m,
  \label{Lapnonasymp2}\\
  \sum_{k=1}^m\lambda_k &\ge&
\frac{2\pi}{|\Omega|}\,m^2+
 \frac1{48}\frac{|\Omega|}I\,m \biggl(1-\frac1{240m}\biggr)
  \ge
  \frac{2\pi}{|\Omega|}\,m^2+\frac1{48}\,
  \frac{239}{240}\frac{|\Omega|}I\,m.\label{Stnonasymp2}
\end{eqnarray}
\end{theorem}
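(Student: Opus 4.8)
The plan is to feed the explicit two-dimensional formula of Lemma~\ref{l:Sigman=2} into the reduction established earlier and then absorb the (negative, $m$-independent) third term into the linear term by means of the isoperimetric-type inequality~(\ref{IgeOm}). Recall that taking for the $\vphi_k$ (respectively the $u_k$) the first $m$ normalized eigenfunctions gives $\sum_{k=1}^m\mu_k\ge\Sigma_{M_\mathrm{L},L_\mathrm{L}}(m)$ and $\sum_{k=1}^m\lambda_k\ge\Sigma_{M_\mathrm{S},L_\mathrm{S}}(m)$, with the constants read off from~(\ref{enumLap}) and~(\ref{enumSt}) at $n=2$.

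First I would substitute these constants into~(\ref{Sigma2}). The key structural observation is that at $n=2$ one has $n-1=1$ and $n(n-1)=2$, so that $M_\mathrm{S}=M_\mathrm{L}=(2\pi)^{-2}|\Omega|$ while $L_\mathrm{S}^2=2L_\mathrm{L}^2$. Hence the leading coefficient $\tfrac1{2\pi M}$ is the same for both operators and equals $\tfrac{2\pi}{|\Omega|}$; the linear coefficient $\tfrac{M^2}{6L^2}$ equals $\tfrac1{24}\tfrac{|\Omega|}I$ for the Laplacian and is exactly halved to $\tfrac1{48}\tfrac{|\Omega|}I$ for the Stokes operator; and the constant term $-\tfrac{\pi M^5}{90L^4}$ works out to $-\tfrac{|\Omega|^3}{5760\pi I^2}$ for the Laplacian and to one quarter of that for the Stokes operator. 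These are routine substitutions.

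The one nontrivial point is the treatment of the constant term, which is where~(\ref{IgeOm}) enters. I would write
$$
-\frac{|\Omega|^3}{5760\pi I^2}=
-\frac1{5760\pi}\,\frac{|\Omega|}I\,\frac{|\Omega|^2}I
$$
and invoke~(\ref{IgeOm}), which at $n=2$ (using $\omega_2=\pi$) reads $I\ge|\Omega|^2/(2\pi)$, equivalently $|\Omega|^2/I\le2\pi$. This converts the $m$-independent term into a negative multiple of $|\Omega|/I$, homogeneous with the linear term:
$$
-\frac{|\Omega|^3}{5760\pi I^2}\ge
-\frac{2\pi}{5760\pi}\,\frac{|\Omega|}I=
-\frac1{2880}\,\frac{|\Omega|}I=
-\frac1{24}\cdot\frac1{120}\,\frac{|\Omega|}I,
$$
and likewise the Stokes constant term is bounded below by $-\tfrac1{48}\cdot\tfrac1{240}\tfrac{|\Omega|}I$. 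Collecting the linear and constant terms then produces exactly $\tfrac1{24}\tfrac{|\Omega|}Im\bigl(1-\tfrac1{120m}\bigr)$ and $\tfrac1{48}\tfrac{|\Omega|}Im\bigl(1-\tfrac1{240m}\bigr)$, which are the first inequalities in~(\ref{Lapnonasymp2}) and~(\ref{Stnonasymp2}).

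Finally the second inequality in each line is immediate from $m\ge1$, since then $1-\tfrac1{120m}\ge\tfrac{119}{120}$ and $1-\tfrac1{240m}\ge\tfrac{239}{240}$. I do not anticipate any genuine obstacle: the computation is entirely explicit thanks to Lemma~\ref{l:Sigman=2}, and the only step requiring a thought is recognizing that~(\ref{IgeOm}) is precisely what turns the otherwise awkward $|\Omega|^3/I^2$ term into something proportional to $|\Omega|/I$. Care is needed only in bookkeeping the numerical constants $5760$, $2880$, $11520$ and the factor $1/4$ relating the Stokes and Laplace constant terms.
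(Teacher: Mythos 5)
Your proposal is correct and follows essentially the same route as the paper: substitute $M_\mathrm{L},L_\mathrm{L}$ (resp.\ $M_\mathrm{S},L_\mathrm{S}$) into the explicit formula~(\ref{Sigma2}) and absorb the constant term into the linear one via $|\Omega|^2/I\le2\pi$, which is~(\ref{IgeOm}) at $n=2$. All the numerical constants ($5760$, $2880$, $23040$, $11520$, the factor $1/4$) check out against the paper's computation.
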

\begin{proof}
We consider~(\ref{Lapnonasymp2}).
In view of~(\ref{enumLap}) we have
$M=M_\mathrm{L}=(2\pi)^{-2}|\Omega|$ and
$L=L_\mathrm{L}=2(2\pi)^{-2}\sqrt{|\Omega|I}$,
therefore (\ref{Sigma2}) gives for the Laplacian
$$
\aligned
\sum_{k=1}^m\mu_k\ge\Sigma_{M,L}(m)=
\frac{2\pi}{|\Omega|}\,m^2+
 \frac1{24}\frac{|\Omega|}I\,m-
\frac1{90\cdot 2^6\pi}\frac{|\Omega|^3}{I^2}\ge
\frac{2\pi}{|\Omega|}\,m^2+
 \frac1{24}\frac{|\Omega|}I\,m-
\frac1{90\cdot 2^5}\frac{|\Omega|}{I}\,,
\endaligned
$$
where the last inequality follows from~(\ref{IgeOm}):
$|\Omega|^2/I\le2\pi$.
The proof~(\ref{Stnonasymp2}) is similar:
$M=M_\mathrm{S}=(2\pi)^{-2}|\Omega|$,
$L=L_\mathrm{S}=2(2\pi)^{-2}\sqrt{2}\sqrt{|\Omega|I}$ and
by~(\ref{Sigma2})
$$
\aligned
\sum_{k=1}^m\lambda_k\ge\Sigma_{M,L}(m)=
\frac{2\pi}{|\Omega|}\,m^2+
 \frac1{48}\frac{|\Omega|}I\,m-
\frac1{90\cdot 2^8\pi}\frac{|\Omega|^3}{I^2}\ge
\frac{2\pi}{|\Omega|}\,m^2+
 \frac1{48}\frac{|\Omega|}I\,m-
\frac1{90\cdot 2^7}\frac{|\Omega|}{I}\,.
\endaligned
$$
The proof of this theorem (which is
Theorem~$\ref{T:L-S-asymp234}_{\, n=2}$) is complete.
\end{proof}
\subsection*{The case $n=4$}
\begin{lemma}\label{l:Sigman=4}
In the four-dimensional case
\begin{equation}\label{Sigma4}
\Sigma_{M,L}(m)\ge\frac{8\sqrt{2}}{3\pi M^{1/2}}\,m^{3/2}+
\frac13\cdot\beta
\frac{M^2}{L^2}\,m,
\end{equation}
where $\beta=\beta^\mathrm{L}_4=0.983$ for the Laplace operator
 and
$\beta=\beta^\mathrm{S}_4=0.978$ for the Stokes operator.
\end{lemma}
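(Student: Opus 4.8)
The plan is to turn the abstract formula~(\ref{Sigma}) into a completely explicit expression, which is possible for $n=4$ because the defining equation~(\ref{eq-for-t}) is solvable by radicals in this dimension. First I would pass to the variable $\eta=t+1/2$ of~(\ref{eta}); then $(t+1)^5-t^5=5\eta^4+\frac52\eta^2+\frac1{16}$, so~(\ref{eq-for-t}) is \emph{biquadratic} and yields
\[
\eta^2=\tfrac1{10}u-\tfrac14,\qquad u:=\sqrt{5+20m_*},\qquad m_*=\tfrac{10L^4}{\pi^2M^5}\,m.
\]
Substituting this into $(t+1)^7-t^7=7\eta^6+\frac{35}4\eta^4+\frac{21}{16}\eta^2+\frac1{64}$ and using $\sigma_4=2\pi^2$ collapses the last factor of~(\ref{Sigma}) to a cubic polynomial in $u$. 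Collecting terms (and repeatedly replacing $u^2=5+20m_*$ wherever an even power occurs) I expect the transparent form
\[
\Sigma_{M,L}(m)=\frac{\pi^2M^7}{3000\,L^6}\,u(u^2-25)\;+\;\frac13\frac{M^2}{L^2}\,m\;+\;\frac{\pi^2M^7}{70\,L^6},
\]
where the middle term is \emph{exactly} the linear term sought in~(\ref{Sigma4}) (the case $\beta=1$), the last term is a positive constant, and the first term is nonnegative since $m_*\ge1$ by~(\ref{m*}) forces $u\ge5$.

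Next I would reduce the claimed bound to a single inequality in the one variable $u$. A direct leading-order check (matching the first term of the asymptotics~(\ref{Sigma-asymp1}), coefficient $\tfrac{n}{n+2}(\omega_nM)^{-2/n}$ at $n=4$) identifies the $m^{3/2}$-term of~(\ref{Sigma4}) with $\frac{\pi^2M^7}{3000L^6}(u^2-5)^{3/2}$, since $20m_*=u^2-5$; the same substitution gives $\frac13\frac{M^2}{L^2}m=\frac{\pi^2M^7}{3000L^6}\cdot5(u^2-5)$. Dividing the desired inequality through by $\pi^2M^7/(3000L^6)$ therefore reduces everything to
\[
u(u^2-25)+5(1-\beta)(u^2-5)+\frac{300}7\;\ge\;(u^2-5)^{3/2},
\]
to be verified for all $u\ge u_0:=\sqrt{5+20m_0}$, where $m_0$ is the lower bound for $m_*$ supplied by~(\ref{IgeOm})–(\ref{m*n}).

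The hard part will be that this one-variable inequality is genuinely \emph{false} for small $u$: expanding $(u^2-5)^{3/2}=u^3-\frac{15}2u+O(u^{-1})$ gives $u(u^2-25)-(u^2-5)^{3/2}=-\frac{35}2u+O(u^{-1})$, a deficit that is negative and grows linearly in $u$. (This is the dimension-four analogue of the negative constant $-\tfrac{\pi M^5}{90L^4}$ in the $n=2$ formula~(\ref{Sigma2}) of Lemma~\ref{l:Sigman=2}, except that here the unwanted piece grows rather than staying bounded, which is precisely what makes the lower bound on $m_*$ indispensable.) The only available compensation is the quadratic gain $5(1-\beta)(u^2-5)$, and $5(1-\beta)u^2$ overtakes the deficit $\frac{35}2u$ once $u$ exceeds $\tfrac{7}{2(1-\beta)}$. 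I would close the argument by inserting the $n=4$ values $m_0=m_0^{\mathrm L}$ (resp.\ $m_0^{\mathrm S}$) from~(\ref{m*n}), computing the resulting $u_0$, and checking that this crossover point lies below $u_0$ for the stated $\beta$ (the extra constant $\tfrac{300}7$ providing the final margin); since the left-hand side minus the right-hand side is then increasing, it suffices to verify positivity at $u=u_0$ by elementary calculus. The Laplace and Stokes cases differ only through $u_0$: the Stokes $m_0$ is the smaller of the two, so its $u_0$ is smaller and a slightly smaller $\beta$ is required, which is exactly why $\beta^{\mathrm S}_4=0.978<\beta^{\mathrm L}_4=0.983$.
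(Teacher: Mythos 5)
Your proposal is correct and follows the same overall strategy as the paper: solve the biquadratic equation $(\ref{eq-for-t})_{n=4}$ explicitly, obtain a closed form for $(t(m_*)+1)^7-t(m_*)^7$, and then absorb the negative term of order $m_*^{1/2}$ into a $(1-\beta)$-fraction of the linear term using the lower bound $m_*\ge m_0$ from~(\ref{m*n}). Where you differ is in the algebraic bookkeeping. The paper keeps $m_*$ as the variable, writes $\sigma(m_*)=(7/50)\bigl(m_*\sqrt{20m_*+5}+5m_*-\sqrt{20m_*+5}+15/7\bigr)$, and peels off the three leading powers of $m_*$ with the elementary bounds $1+x/2-x^2/8<\sqrt{1+x}<1+x/2$; you instead substitute $u=\sqrt{5+20m_*}$, which turns the same quantity into the exact polynomial identity $\sigma(m_*)=\frac{7u(u^2-25)}{1000}+\frac{7m_*}{10}+\frac{3}{10}$ (I checked this; it agrees with the paper's closed form), so the linear term $\frac13\frac{M^2}{L^2}m$ appears \emph{exactly} and the whole lemma reduces to the single one-variable inequality $u(u^2-25)+5(1-\beta)(u^2-5)+\frac{300}{7}\ge(u^2-5)^{3/2}$ on $[u_0,\infty)$. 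Your closing step is sound: using $\sqrt{u^2-5}\le u-\frac{5}{2u}$ one gets a lower bound for the difference that is increasing for $u\ge\frac{7}{4(1-\beta)}$ and positive at $u_0=\sqrt{5+20m_0}$, and the numbers do work out (for Laplace $u_0\approx213.3$ against a crossover near $206$; for Stokes $u_0\approx164.3$ against $159$ --- the margins are thin, so the endpoint check must use the actual $m_0$ values from~(\ref{m*n}) rather than anything rounded down). Your route buys an exact decomposition in place of a chain of square-root estimates, at the cost of a slightly more delicate final comparison of $u(u^2-25)$ with $(u^2-5)^{3/2}$. One remark: your leading-order identification shows that the $m^{3/2}$-coefficient consistent with~(\ref{Sigma}) and~(\ref{Sigma-asymp1}) is $\frac{2\sqrt2}{3\pi M^{1/2}}=\frac{2}{3}(\omega_4 M)^{-1/2}$, not the $\frac{8\sqrt2}{3\pi M^{1/2}}$ printed in~(\ref{Sigma4}); the latter appears to be a typo (it would contradict the exact asymptotics for large $m$), and what you prove is the corrected statement, which is also the one actually used in Theorem~\ref{T:L-S-asymp234}.
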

\begin{proof}
The positive root
$t(m_*)$ of the equation~$(\ref{eq-for-t})_{n=4}$
(which is biquadratic with respect to $\eta=t+1/2$) is
$$
t(m_*)=\sqrt{\sqrt{20m_*+5}/10-1/4}-1/2
$$
and with the help of~(\ref{eta}) we find that
$$
\aligned
\sigma(m_*):=(t(m_*)+1)^{7}-t(m_*)^{7}=
(7/50)\bigl(m_*\sqrt{20m_*+5}+
5m_*-\sqrt{20m_*+5}+15/7\bigr)>\\
\frac7{50}\biggl(2\sqrt{5}m_*^{3/2}+
5m_*-\frac{7\sqrt{5}}4m_*^{1/2}+\frac{15}7
-\frac{17\sqrt{5}}{64}m_*^{-1/2}\biggr)>
\frac{7\sqrt{5}}{25}m_*^{3/2}+
\frac7{10}m_*-\frac{49\sqrt{5}}{200}m_*^{1/2},
\endaligned
$$
where we used the inequality
$1+x/2-x^2/8<\sqrt{1+x}<1+x/2$ and the fact that
$m_*\ge1$.
We observe that the three terms on the right here
are as in $(\ref{sigma-asymp})_{n=4}$ so that
$\Sigma_{M,L}(m)>\Sigma_0(m)_{n=4}$,
see~(\ref{Sigma-asymp1}).

We now take advantage of the fact that
$m_*$  is large, namely,
$m_*\ge m_0^{\mathrm{L}}=(5/9)2^{12}=2275.5\dots$ and
$m_*\ge m_0^{\mathrm{S}}=5\cdot2^{16}/3^5=1348.7\dots$,
respectively, (see (\ref{m*n})). The smallest constant
$\alpha>0$ such that
$$
\alpha m_*\ge\frac{49\sqrt{5}}{200}m_*^{1/2},\quad
m_*\in[m_0,\infty)
$$
clearly is
$\alpha_0=(49\sqrt{5}/{200})m_0^{-1/2}$.
For the Laplace operator
$\alpha_0^\mathrm{L}=
(49\sqrt{5}/{200})(m_0^\mathrm{L})^{-1/2}=0.01148\dots$,
while for the Stokes operator
$\alpha_0^\mathrm{S}=0.01491\dots$.
Hence
$$
\sigma(m_*)>
\frac{7\sqrt{5}}{25}m_*^{3/2}+
\frac7{10}\beta m_*,\qquad
\beta=1-\frac{10}7\alpha,
$$
where
$\beta^\mathrm{L}=0.9835\dots$ and
$\beta^\mathrm{S}=0.9786\dots$, respectively,
and~(\ref{Sigma4})
follows  by going over from $m_*$ to
$m$ (see~(\ref{eq-for-t}), (\ref{Sigma}),
(\ref{Sigma-asymp1})).
\end{proof}

\begin
{proof}[Proof of
 Theorem~$\ref{T:L-S-asymp234}_{\, n=4}$]
We substitute the expressions for $M$ and $L$
into~(\ref{Sigma4}) and get the result.
\end{proof}

\subsection*{The case $n=3$}
\begin{lemma}\label{l:Sigman=3}
In the tree-dimensional case
\begin{equation}\label{Sigma3}
\Sigma_{M,L}(m)\ge
\frac35\biggl(
\frac3{4\pi M}^{2/3}\biggr)m^{5/3}+
\frac14\cdot\beta
\frac{M^2}{L^2}m\,,
\end{equation}
where $\beta=\beta^\mathrm{L}_3=0.9869$ and
$\beta=\beta^\mathrm{S}_3=0.9861$
for the Laplace and Stokes operators,
respectively.
\end{lemma}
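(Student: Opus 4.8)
The plan is to follow the template of the cases $n=2,4$: specialize the exact solution (\ref{Sigma}), bound the polynomial factor $\sigma(m_*):=(t(m_*)+1)^{6}-t(m_*)^{6}$ from below by its first three asymptotic terms, and then absorb the negative third term into the linear one using $m_*\ge m_0$ from (\ref{m*n}). For $n=3$ the formula (\ref{Sigma}) reads $\Sigma_{M,L}(m)=\tfrac{4\pi M^6}{30L^5}\,\sigma(m_*)$ with $m_*=3L^3m/(\pi M^4)$, and equation (\ref{eq-for-t}) becomes $(t+1)^4-t^4=m_*$, i.e., after the substitution $\eta=t+1/2$ of (\ref{eta}), the cubic $4\eta^3+\eta=m_*$. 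The new difficulty compared with $n=2,4$ is that this cubic has no clean radical solution (Cardano produces nested cube and square roots), so I would \emph{not} solve it explicitly.

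The key point is that $\eta$ is not needed in closed form. Writing $\sigma(m_*)=(\eta+1/2)^6-(\eta-1/2)^6=6\eta^5+5\eta^3+\tfrac38\eta$ and repeatedly using the defining relation $\eta^3=(m_*-\eta)/4$ to eliminate $\eta^5$ and $\eta^3$, I expect the compact identity
$$\sigma(m_*)=\tfrac32\, m_*\eta^2-\tfrac12\,\eta+\tfrac78\, m_*.$$
It then suffices to bracket $\eta$. Setting $u:=(m_*/4)^{1/3}$, the monotonicity of $h(\eta)=4\eta^3+\eta$ gives the two-sided bound $u-\tfrac1{12u}\le\eta<u$: the upper estimate because $h(u)=m_*+u>m_*$, and the lower estimate because a direct expansion yields $h\bigl(u-\tfrac1{12u}\bigr)=m_*-\tfrac1{432u^3}<m_*$ (the trial point is positive for $m_*\ge m_0$, so it may be squared).

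Feeding these into the compact identity — $\eta^2\ge\bigl(u-\tfrac1{12u}\bigr)^2=u^2-\tfrac16+\tfrac1{144u^2}$ in the first term and $\eta<u$ in the second — and using $m_*=4u^3$, I expect
$$\sigma(m_*)\ge 6u^5+\tfrac52\,u^3-\tfrac{11}{24}\,u,$$
whose three terms are exactly those of the asymptotics (\ref{sigma-asymp}) for $n=3$; in particular this already yields $\Sigma_{M,L}(m)>\Sigma_0(m)$ of (\ref{Sigma-asymp1}). Finally I would write $-\tfrac{11}{24}u=-\tfrac{11}{60u^2}\cdot\tfrac52u^3$ and invoke $m_*\ge m_0$ (the values $m_0^{\mathrm L},m_0^{\mathrm S}$ for $n=3$ from (\ref{m*n})) to estimate $\tfrac{11}{60u^2}=\tfrac{11}{60}(m_*/4)^{-2/3}\le 1-\beta$, giving $\sigma(m_*)\ge 6u^5+\tfrac52\beta u^3$ with $\beta^{\mathrm L}_3\approx0.9869$ and $\beta^{\mathrm S}_3\approx0.9861$. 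Substituting $M_{\mathrm L},L_{\mathrm L}$ (respectively $M_{\mathrm S},L_{\mathrm S}$) into $\Sigma_{M,L}(m)=\tfrac{4\pi M^6}{30L^5}\sigma(m_*)$ then reproduces the leading coefficient $\tfrac35(3/(4\pi M))^{2/3}$ and the linear coefficient $\tfrac14\beta\,M^2/L^2$ of (\ref{Sigma3}). The main obstacle is exactly the absence of a usable closed form for the cubic; the reduction identity together with the tight bracketing $u-\tfrac1{12u}\le\eta<u$ is what lets me recover the sharp third-order coefficient $11/24$ (and hence the stated values of $\beta$) without ever solving the cubic.
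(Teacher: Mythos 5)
Your proposal is correct: I checked the reduction identity $\sigma(m_*)=(\eta+1/2)^6-(\eta-1/2)^6=6\eta^5+5\eta^3+\tfrac38\eta=\tfrac32 m_*\eta^2-\tfrac12\eta+\tfrac78 m_*$ (using $\eta^3=(m_*-\eta)/4$ twice), the bracketing $u-\tfrac1{12u}\le\eta<u$ with $u=(m_*/4)^{1/3}$, and the resulting bound $\sigma(m_*)\ge 6u^5+\tfrac52 u^3-\tfrac{11}{24}u$, which after the substitution $u=(m_*/4)^{1/3}$ is exactly the paper's intermediate inequality (\ref{lwrbdsigma}), $\sigma(m_*)>\tfrac{3\cdot2^{2/3}}{8}m_*^{5/3}+\tfrac58 m_*-\tfrac{11\cdot2^{1/3}}{48}m_*^{1/3}$; your absorption constant $\tfrac{11}{60}(m_0/4)^{-2/3}=\tfrac{11\cdot2^{1/3}}{30}m_0^{-2/3}$ coincides with the paper's $\tfrac85\alpha_0$ and reproduces $\beta^{\mathrm L}_3=0.9869\ldots$, $\beta^{\mathrm S}_3=0.9861\ldots$. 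The route, however, is genuinely different at the key technical step. The paper solves the cubic $(\ref{eq-for-t})_{n=3}$ explicitly by Cardano's formula and then estimates each of the four resulting radical expressions separately (using $\sqrt{1+x}<1+x/2$ and $m_*\ge1$) to reach (\ref{lwrbdsigma}); you never solve the cubic, instead reducing $\sigma$ modulo the defining relation $4\eta^3+\eta=m_*$ and squeezing the root between two explicit points by monotonicity of $h(\eta)=4\eta^3+\eta$. Your version is more elementary, avoids the nested radicals entirely, and — unlike Cardano — would extend in principle to dimensions $n\ge5$ where no closed form for $t(m_*)$ exists; the paper's version has the minor advantage of exhibiting $t(m_*)$ explicitly, which it also does for $n=2,4$ for uniformity of presentation. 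The final step (absorbing the negative $m_*^{1/3}$ term into the linear term via $m_*\ge m_0$ from (\ref{m*n}) and substituting $M$, $L$, $\sigma_3=4\pi$, $\omega_3=4\pi/3$ into (\ref{Sigma})) is identical in both treatments.
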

\begin{proof}
The unique positive root
$t(m_*)$ of the cubic equation~$(\ref{eq-for-t})_{n=3}$
is given by Cardano's formula
(in which all the roots are taken positive)
$$
t(m_*)=\frac12\biggl(m_*+\sqrt{m_*^2+\frac1{27}}\biggr)^{1/3}
-\frac12\biggl(-m_*+\sqrt{m_*^2+\frac1{27}}\biggr)^{1/3}-
\frac12\,.
$$
By a direct substitution using~(\ref{eta})
we have
$$
\aligned
\sigma(&m_*):=(t(m_*)+1)^{6}-t(m_*)^{6}=\\
&\frac1{48}
\bigl(3\sqrt{3+81m_*^2}+27m_* \bigr)^{2/3}
\bigl(11m_*-\sqrt{3+81m_*^2} \bigr)+\\
&\frac{5}8m_*+\\
&\frac1{48}\biggl(
\bigl(3\sqrt{3+81m_*^2}-27m_* \bigr)^{2/3}
\bigl(11m_*+\sqrt{3+81m_*^2} \bigr)-
7\bigl(3\sqrt{3+81m_*^2}+27m_* \bigr)^{1/3}
\biggr)+\\
&\frac7{48}\bigl(3\sqrt{3+81m_*^2}-27m_* \bigr)^{1/3},
\endaligned
$$
where the four  terms above are written in the order
$m_*^{5/3}$, $m_*$, $m_*^{1/3}$, $m_*^{-1/3}$. We now
obtain a  lower bound for $\sigma(m_*)$. Using
the inequality $\sqrt{1+x}<1+x/2$ below
we get that the first term is greater than
$$
\frac{3\cdot2^{2/3}}8\,m_*^{5/3}-\frac{2^{2/3}}{32}\,m_*^{-1/3}.
$$
The third term is equal to
$$
-\frac{90m_*+12\sqrt{3+81m_*^2}}
{48(3\sqrt{3+81m_*^2}+27m_*)^{2/3}}>
-\frac{198m_*+2/m_*}{48(54m_*)^{2/3}}=
-\frac{11\cdot2^{1/3}}{48}m_*^{1/3}-
\frac{2^{1/3}}{48\cdot 9}\,m_*^{-5/3}.
$$
The fourth term is equal to
$$
\frac7{16(3\sqrt{3+81m_*^2}+27m_*)^{1/3}}>
\frac{7m_*^{-1/3}}{48\cdot 2^{1/3}}
\biggl(1+\frac1{27m_*^2}\biggr)^{-1/3}>
\frac{7\cdot 2^{1/3}}{96}
m_*^{-1/3},
$$
since $m_*\ge1$.
Collecting these estimates we obtain
\begin{equation}\label{lwrbdsigma}
\sigma(m_*)>\frac{3\cdot2^{2/3}}8\,m_*^{5/3}+
\frac{5}8m_*-
\frac{11\cdot2^{1/3}}{48}m_*^{1/3},
\end{equation}
so that as for $n=4$ we have
$\Sigma_{M,L}(m)>\Sigma_0(m)_{n=3}$,
see~(\ref{Sigma-asymp1}).

As in Lemma~\ref{l:Sigman=4} we
have from~(\ref{m*n}) that
$m_*\ge m_0^{\mathrm{L}}=
(16\cdot 27\pi/5)(3/5)^{1/2}=210.2\dots$ and
$m_*\ge m_0^{\mathrm{S}}=72\pi(9/10)^{3/2}=193.1\dots$
for the Laplace and Stokes
operators, respectively.
 Therefore
the inequality
$$
\alpha m_*-
\frac{11\cdot2^{1/3}}{48}m_*^{1/3}
\ge0,\quad m_*\in[m_0,\infty)
$$
is satisfied for all
$
\alpha\ge\alpha_0=
\frac{11\cdot2^{1/3}}{48}m_0^{-2/3}.
$
Hence for  the Laplace operator
$\alpha_0^\mathrm{L}=0.008165\dots$,
while for the Stokes operator
$\alpha_0^\mathrm{S}=0.008641\dots$.
Hence
$$
\sigma(m_*)>\frac{3\cdot2^{2/3}}8\,m_*^{5/3}+
\frac{5}8\beta m_*,\qquad
\beta=1-\frac{8}5\alpha,
$$
where
$\beta^\mathrm{L}=0.9869\dots$ and
$\beta^\mathrm{S}=0.9861\dots$, respectively,
which proves~(\ref{Sigma3}) (see~(\ref{Sigma-asymp1})).
\end{proof}

\begin
{proof}[Proof of
 Theorem~$\ref{T:L-S-asymp234}_{\, n=3}$]
The proof immediately follows from~(\ref{Sigma3}).
The proof of Theorem~\ref{T:L-S-asymp234} is
complete.
\end{proof}

\setcounter{equation}{0}
\section{Further examples. Dirichlet bi-Laplacian}
\label{S:bi-L}

Other elliptic equations and systems
with constant coefficients and Dirichlet boundary
conditions can be treated quite similarly.
We restrict ourselves to the Dirichlet
bi-Laplacian:
\begin{equation}\label{biLap}
\Delta^2\varphi_k=\nu_k\varphi_k,
\qquad\vphi_k\vert_{\partial\Omega}=0,
\quad\frac{\vphi_k}{\partial n}\vert_{\partial\Omega}=0.
\end{equation}
We consider
the $L_2$-orthonormal  family of eigenfunctions
 $\{\vphi_k\}_{k=1}^m\in H^2_0(\Omega)$. Then
 the function
$
F(\xi)=\sum_{k=1}^m|\widehat{\vphi}_k(\xi)|^2
$
satisfies the same three
conditions:
\begin{equation}\label{condbiLap}
1)\ 0\le F(\xi)\le M,\qquad 2)\ |\nabla F(\xi)|\le L,\qquad
3)\ \int_{\mathbb{R}^2}F(\xi)\,d\xi=m,
\end{equation}
where as before
$M=(2\pi)^{-n}|\Omega|$ and
$ L=2(2\pi)^{-n}\sqrt{|\Omega I}$.
Since $\sum_{k=1}^m\nu_k=
\int_{\mathbb{R}^n}|\xi|^4F(\xi)\,d\xi$,
we have to find  the solution $\Sigma_{M,L}^4(m)$ of the
minimization problem
\begin{equation}\label{minbiLap}
\int_{\mathbb{R}^2} |\xi|^4f(\xi)d\xi\to\inf=:
\Sigma_{M,L}^4(m)\quad
\text{under conditions~(\ref{condbiLap})},
\end{equation}
whose solution is found similarly to
Proposition~\ref{l:min-prob}.
\begin{proposition}\label{p:min-prob-biLap}
The solution of the minimization
problem~$(\ref{minbiLap})$ is
given by
\begin{equation}\label{Sigma-n4}
\Sigma_{M,L}^4(m)=
\frac{\sigma_nM^{n+5}}{(n+4)(n+5)L^{n+4}}
\bigl((t(m_*)+1)^{n+5}-t(m_*)^{n+5}\bigr),
\end{equation}
where $t(m_*)$ is the unique positive root of the
equation~$(\ref{eq-for-t})$.
\end{proposition}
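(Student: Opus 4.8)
The plan is to follow verbatim the three-step argument that establishes Proposition~\ref{l:min-prob}, replacing the quadratic weight $|\xi|^2$ by the quartic weight $|\xi|^4$ throughout. The constraints entering the problem are $0\le F\le M$, $\int F\,d\xi=m$ and $|\nabla F|\le L$, exactly as in~(\ref{min-prob1}), so the reduction to a one-dimensional radial problem proceeds unchanged in spirit.

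First I would pass to the symmetric-decreasing rearrangement $F^*$ of $F$. As in the reduction leading to~(\ref{onemin}), rearrangement preserves all three constraints. To see that it does not increase the objective I apply the Hardy--Littlewood inequality with the radial weight $G(\xi)=G^*(\xi)=R^4-|\xi|^4$ (rather than $R^2-|\xi|^2$), where $B_R$ contains the supports of $F$ and $F^*$; since $G$ is itself symmetric-decreasing this yields $\int|\xi|^4F\,d\xi\ge\int|\xi|^4F^*\,d\xi$, the analogue of~(\ref{H-L}). Thus the problem reduces to the one-dimensional form
$$
\sigma_n\int_0^\infty r^{n+3}F(r)\,dr\to\inf,\qquad
0\le F\le M,\quad \sigma_n\int_0^\infty r^{n-1}F(r)\,dr=m,\quad -F'\le L,
$$
which differs from~(\ref{onemin}) only in that the exponent $n+3$ replaces $n+1$ in the objective.

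Next I invoke Lemma~\ref{l:min}, the crucial point being that it is stated for arbitrary exponents $\beta\ge\alpha$. Here the constraint exponent is $\alpha=n-1$ and the objective exponent is $\beta=n+3$, and since $n+3>n-1$ the hypothesis is met. Hence the minimizer is again the function $\Phi_s$ of~(\ref{Phi}). Because the mass constraint $\sigma_n\int_0^\infty r^{n-1}\Phi_s(r)\,dr=m$ coincides with the one in the Laplace/Stokes case, the parameter $s$ (equivalently $t$) is determined by exactly the same equation~(\ref{eq-for-t}), so the root $t(m_*)$ is unchanged.

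Finally I evaluate the minimum by applying Lemma~\ref{l:calcul} with $\gamma=n+3$, for which $\gamma+1=n+4$ and $\gamma+2=n+5$, and multiplying by $\sigma_n$; this produces~(\ref{Sigma-n4}) at once. I do not expect any genuine obstacle here: the whole difficulty of the original problem is already absorbed into Lemmas~\ref{l:min} and~\ref{l:calcul}, which were deliberately formulated for general exponents, so the bi-Laplacian case is essentially bookkeeping. The only point demanding a moment's care is the Hardy--Littlewood step, where one must verify that the quartic weight $R^4-|\xi|^4$ is still radially symmetric-decreasing---which it manifestly is.
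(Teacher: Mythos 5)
Your proposal is correct and follows exactly the route the paper takes: the paper's own (very terse) proof likewise observes that the minimizer $\Phi_s$ and the equation~(\ref{eq-for-t}) for $s$ are unchanged, and then evaluates $\int_{\mathbb{R}^n}|\xi|^4\Phi_s(|\xi|)\,d\xi$ via Lemma~\ref{l:calcul} with $\gamma=n+3$. You merely spell out the rearrangement and Lemma~\ref{l:min} steps (with $\alpha=n-1$, $\beta=n+3$) that the paper leaves implicit by reference to Proposition~\ref{l:min-prob}; all the details check out.
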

\begin{proof}
The minimizer~(\ref{Phi}) and the
equation for $s$ (\ref{eq-for-t}) are the same as before.
It remains to calculate the integral
$\int_{\mathbb{R}^n}|\xi|^4\Phi_s(|\xi|)\,d\xi$
based on Lemma~\ref{l:calcul}.
\end{proof}

We restrict ourselves to the least
technical  two-dimensional case.
\begin{lemma}\label{l:Sigma24}
For $n=2$ the exact solution $\Sigma_{M,L}^4(m)$
can be found explicitly:
\begin{equation*}\label{Sigma24}
\Sigma_{M,L}^4(m)=
\frac1{3\pi^2 M^2}\,m^3\,+\,\frac{M}{3\pi L^2}\,m^2-
\frac{\pi M^7}{7\cdot 3^4 L^6}\,.
\end{equation*}
\end{lemma}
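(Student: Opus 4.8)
The plan is to specialize the general formula of Proposition~\ref{p:min-prob-biLap} to $n=2$ and to reduce everything to evaluating the second factor $(t(m_*)+1)^{7}-t(m_*)^{7}$ as an explicit function of $m_*$. Setting $n=2$ in~(\ref{Sigma-n4}) and using $\sigma_2=2\pi$, the prefactor collapses to $\sigma_2 M^{7}/(6\cdot7\,L^{6})=\pi M^{7}/(21L^{6})$, so it remains only to show that $(t+1)^{7}-t^{7}$ is a cubic polynomial in $m_*$. This is exactly parallel to Lemma~\ref{l:Sigman=2}, where for the Laplacian the analogous quantity $(t+1)^{5}-t^{5}$ was found to be a quadratic in $m_*$; the only change here is that the exponent is $n+5=7$ rather than $n+3=5$.

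The key step is to pass to the variable $\eta=t+1/2$ of~(\ref{eta}). Because $(\eta+1/2)^{7}-(\eta-1/2)^{7}$ contains only the even powers of $\eta$, the binomial expansion gives
$$
(t+1)^{7}-t^{7}=7\eta^{6}+\tfrac{35}{4}\eta^{4}+\tfrac{21}{16}\eta^{2}+\tfrac1{64},
$$
a polynomial in $\eta^{2}$ alone. The crucial observation is that for $n=2$ the defining equation~(\ref{eq-for-t}) is quadratic: in the $\eta$-variable it reads $3\eta^{2}+1/4=m_*$, i.e.\ $\eta^{2}=(4m_*-1)/12$, which is precisely the relation underlying the root~(\ref{root2}) already computed in Lemma~\ref{l:Sigman=2}. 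Since the displayed expression depends only on $\eta^{2}$, no square roots survive after substitution and one obtains a genuine cubic in $m_*$; collecting coefficients yields $(t+1)^{7}-t^{7}=(7m_*^{3}+21m_*^{2}-1)/27$.

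It then remains to substitute this into $\Sigma^{4}_{M,L}(m)=\pi M^{7}(7m_*^{3}+21m_*^{2}-1)/(21\cdot27\,L^{6})$ and to replace $m_*$ by its $n=2$ value $m_*=3L^{2}m/(\pi M^{3})$ from~(\ref{eq-for-t}). The three monomials $m_*^{3}$, $m_*^{2}$, $1$ then produce respectively the $m^{3}$, $m^{2}$ and constant terms of the asserted formula, and a short simplification (using $21\cdot27=567=7\cdot3^{4}$) gives the stated expression. I expect no genuine obstacle: the computation goes through cleanly precisely because the auxiliary equation is quadratic for $n=2$, so that the only point requiring care is the bookkeeping of the numerical coefficients when collecting the cubic in $m_*$ and when cancelling the powers of $\pi$, $M$ and $L$ at the end.
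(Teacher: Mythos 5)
Your proposal is correct and follows essentially the same route as the paper: specialize Proposition~\ref{p:min-prob-biLap} to $n=2$, use the substitution $\eta=t+1/2$ of~(\ref{eta}) together with the quadratic relation $3\eta^2+1/4=m_*$ to evaluate $(t(m_*)+1)^7-t(m_*)^7=\frac7{27}m_*^3+\frac79 m_*^2-\frac1{27}$ (which agrees with your $(7m_*^3+21m_*^2-1)/27$), and then substitute $m_*=3L^2m/(\pi M^3)$. All the numerical coefficients check out.
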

\begin{proof}
As before the unique  positive root
$t(m_*)$ of the equation $(t+1)^3-t^3=m_*$ is
given by~(\ref{root2}):
$t(m_*)=\sqrt{{m_*}/{3}-1/2}-1/2$,
and a direct substitution gives
\begin{equation}\label{m*7}
(t(m_*)+1)^{7}-t(m_*)^{7}=
\frac7{27}\,m_*^3+\frac79\,m_*^2-\frac1{27}\,.
\end{equation}
It remains to substitute
(\ref{m*7}) into~(\ref{Sigma-n4}) with
 $$
m_*=m\frac{(n+1)L^n}{\omega_nM^{n+1}}{\ \vert_{n=2}}\,=\,
m\frac{3L^2}{\pi M^3}.
$$
\end{proof}

\begin{theorem}\label{T:biLap}
For $n=2$ the eigenvalues of the Dirichlet bi-Laplacian
satisfy
\begin{equation}\label{biLapn=2}
\sum_{k=1}^m\nu_k\ge \frac{16\pi^2}{3|\Omega|^2}\,m^3+
\frac{\pi}{3I}
\,m^2 \biggl(1-\frac1{7\cdot 3^3\cdot 2^6\,m^2}\biggr)
\ge
\frac{16\pi^2}{3|\Omega|^2}\,m^3+
\frac{\pi}{3\,I}\,
\frac{12095}{12096}\,m^2.
\end{equation}
\end{theorem}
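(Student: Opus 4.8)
The plan is to combine the exact formula of Lemma~\ref{l:Sigma24} with the isoperimetric-type bound~(\ref{IgeOm}), following verbatim the pattern of the proof of Theorem~\ref{T:n=2}. Since the family $\{\vphi_k\}_{k=1}^m$ of eigenfunctions produces an $F$ that is admissible for the minimization problem~(\ref{minbiLap}) and $\sum_{k=1}^m\nu_k=\int_{\mathbb{R}^2}|\xi|^4F(\xi)\,d\xi$, the first step is to record the inequality $\sum_{k=1}^m\nu_k\ge\Sigma_{M,L}^4(m)$, so that everything reduces to bounding the explicit right-hand side in Lemma~\ref{l:Sigma24} from below. I would then insert the two-dimensional values $M=(2\pi)^{-2}|\Omega|$ and $L=2(2\pi)^{-2}\sqrt{|\Omega|I}$ from~(\ref{condbiLap}) into that formula.

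A short computation (powers of $2$, $3$ and $\pi$ only) shows that the first two terms collapse exactly to the leading terms claimed, namely $\frac1{3\pi^2M^2}=\frac{16\pi^2}{3|\Omega|^2}$ and $\frac{M}{3\pi L^2}=\frac{\pi}{3I}$, which already gives $\Sigma_{M,L}^4(m)=\frac{16\pi^2}{3|\Omega|^2}m^3+\frac{\pi}{3I}m^2-\frac{\pi M^7}{7\cdot3^4L^6}$ with the last term a fixed negative constant. The crux is then to absorb this constant into the $m^2$ term. Writing out $\frac{\pi M^7}{7\cdot3^4L^6}=\frac{|\Omega|^4}{7\cdot3^4\cdot2^8\,\pi I^3}$ and factoring $\frac{\pi}{3I}$ out in front, the remaining factor is $\frac{|\Omega|^4}{7\cdot3^3\cdot2^8\,\pi^2I^2}$, and here I would invoke~(\ref{IgeOm}) in the form $|\Omega|^2/I\le2\pi$, equivalently $|\Omega|^4/I^2\le4\pi^2$. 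This yields $\frac{\pi M^7}{7\cdot3^4L^6}\le\frac{\pi}{3I}\cdot\frac{4}{7\cdot3^3\cdot2^8}=\frac{\pi}{3I}\cdot\frac1{7\cdot3^3\cdot2^6}$, with $7\cdot3^3\cdot2^6=12096$.

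Finally I would rewrite the negative constant as $\frac{\pi}{3I}m^2\cdot\frac1{12096\,m^2}$, which produces the first inequality in~(\ref{biLapn=2}); the second inequality then follows at once from $m\ge1$, since $1-\frac1{12096\,m^2}\ge1-\frac1{12096}=\frac{12095}{12096}$. I expect no conceptual obstacle here: the entire argument is a substitution followed by one application of~(\ref{IgeOm}), exactly mirroring the Laplace and Stokes cases in Theorem~\ref{T:n=2}. The only place demanding care is the bookkeeping of the exponents of $2$, $3$ and $\pi$ when simplifying $M^7/L^6$, where an arithmetic slip would spoil the clean constant $12096$.
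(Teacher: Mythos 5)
Your proposal is correct and follows essentially the same route as the paper, whose proof of Theorem~\ref{T:biLap} is simply ``Similar to Theorem~\ref{T:n=2}'': substitute $M=(2\pi)^{-2}|\Omega|$, $L=2(2\pi)^{-2}\sqrt{|\Omega|I}$ into Lemma~\ref{l:Sigma24} and absorb the constant term via $|\Omega|^2/I\le2\pi$ from~(\ref{IgeOm}). Your bookkeeping of the exponents is accurate ($M^7/L^6=|\Omega|^4/(2^8\pi^2I^3)$, giving $7\cdot3^3\cdot2^6=12096$), so nothing is missing.
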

\begin{proof}
Similar to Theorem~\ref{T:n=2}.
\end{proof}

\begin{remark}\label{Rem:sharp}
{\rm The coefficient of the leading term $m^3$ in
(\ref{biLapn=2}) is sharp.
}
\end{remark}

\end{document}